\documentclass[11pt]{amsart}

\topmargin0in
\textheight8in

\oddsidemargin0.7in
\evensidemargin 0.5in
\textwidth6in%
\advance\hoffset by -0.5 truecm
\usepackage[pagewise]{lineno}
\usepackage{amssymb}
\usepackage{amsmath}
\usepackage{graphicx}
\usepackage{color}

\newtheorem{Theorem}{Theorem}[section]
\newtheorem{Lemma}[Theorem]{Lemma}
\newtheorem{Corollary}[Theorem]{Corollary}

\newtheorem{Proposition}[Theorem]{Proposition}

\newtheorem{Remark}[Theorem]{Remark}

\def \dim{{\mbox {dim}}\,}

\def\N{{\mathbb N}}
\def\R{{\mathbb R}}

\begin{document}
	\title[Hardy-Sobolev equations on Riemannian manifolds] {On solutions to Hardy-Sobolev equations on Riemannian manifolds}

	\author[G. Henry]{Guillermo Henry}\thanks{G. Henry is partially supported by PICT-2020-01302  grant  from ANPCyT and UBACYT 2023-2025.}
	\address{Departamento de Matem\'atica, FCEyN, Universidad de BuenosAires and IMAS,   CONICET-UBA, Ciudad Universitaria, Pab. I., C1428EHA, Buenos Aires, Argentina and CONICET, Argentina.}
	\email{ghenry@dm.uba.ar}

	\author[J. Petean]{Jimmy Petean}\thanks{J. Petean is supported
		by SECIHTE proyecto CBF-2025-I-1449}
	\address{CIMAT, A.P. 402, 36000, Guanajuato. Gto., M\'exico}
	\email{jimmy@cimat.mx}

	\subjclass{53C21}

	\date{}

	\begin{abstract} Let $(M,g)$ be a closed  Riemannian manifold of dimension at least $3$. Let $S$ be the union of 
the  focal submanifolds of an isoparametric function on $(M,g)$.   
In this article we address the existence of solutions of the Hardy-Sobolev type equation
		$\Delta_g u+K(x)u=\frac{u^{q-1}}{\left(d_{S}(x)\right)^s}$, where $d_{S}(x)$ is the distance from $x$ to $S$ 
		and $q>2$. In particular, we will prove the existence of infinite sign-changing solutions to the
equation.
		
	\end{abstract}
	
	\maketitle

	\section{Introduction}

On a closed Riemannian manifold of dimension $n\geq 3$ we will study a Hardy-Sobolev equation of the form

\begin{equation}\label{H-SequationSubmanifold}
\Delta_g u(x)+K(x)u(x)=\frac{u^{q-1}(x)}{\left(d_g(S,x)\right)^s},
\end{equation}

\noindent
where  $s\in[0,2)$, $q>2$, $S$ is a submanifold of $M$, $\Delta_g =-div_g  \ ( \nabla )$ is the (non-negative) Laplace-Beltrami operator and $d_g (S, -) :M \rightarrow \R$ denotes the distance to $S$. 	
The function $K$ will be assumed to be continuous. 

\medskip

Hardy-Sobolev equations have been extensively studied over the years.  The literature on these equations in the Euclidean 
setting is vast.  Positive solutions to the equations are related to the best constant in the
Hardy-Sobolev inequalities. N. Ghoussoub and C. Yuan considered in \cite{G-Y} such  equations 
on a bounded open set $\Omega$ of $\R^n$ when the singularity is a point in the interior of $\Omega$.
The case when the singularity is in the boundary of the domain is studied  by N. Ghoussoub and F. Robert
in \cite{Ghoussoub-Robert}. Other results about the existence of positive and sign changing solutions
on bounded domains of Euclidean space were obtained for instance in \cite{Ghoussoub, Kang, Kang-Peng}.
In these articles the reader can find references on other results on Hardy-Sobolev equations on open subsets
of Euclidean space. 
 
\medskip

Given the Riemannian manifold $(M,g)$, we denote by  $d_g$ the distance function induced by $g$,
$d_g : M\times M \rightarrow \R_{\geq 0}$.  H.  Jaber considered in \cite{Jaber} the  {\it Hardy-Sobolev type equation},  
	
\begin{equation}\label{H-SequationManifolds}
\Delta_g u(x)+K(x)u(x)= \frac{u^{2^*(s)-1}(x)}{\big(d_g(x_0,x)\big)^s},
\end{equation}
	
\noindent
on $M$, 
where $K \in C^{0}(M)$,   $x_0\in M$, $s\in [0,2)$ and $2^{*}(s)=\frac{2(n-s)}{n-2}$. 
The author  proved in \cite{Jaber} 
the existence of a positive solution $u\in C^0 (M) \cap H_1^2 (M)$, provided  $K(x_0)<c(n,s) Scal_g(x_0)$ (where $c(n,s)$ is a positive constant) if $n\geq 4$ or the mass of $\Delta_g+K$ is positive at $x_0$ if $n=3$, and assuming
that the  operator $\Delta_g +K$ is coercive. A similar problem was considered by
E. H. A.  Thiam in \cite{Thiam0}. 
The equation is critical, in the sense that $H_1^2 (M)$ is 
continuously embedded in  the weighted space 
$L^p  (M,  d_g ( -  ,  x_0  )^{-s}   )$ 
if $p\leq 2^* (s)$ and the embedding  is compact if the inequality is strict. It appears naturally 
in the study of the Hardy-Sobolev inequality:

\begin{equation}
\left(   \int_M    \frac{ | u |^{2^* (s)}}{ d_g (x,x_0 )^s}    dv_g       \right)^{\frac{2}{2^* (s)}}
   \leq A \int_M  \| \nabla u \|^2 dv_g    +B \int_M u^2 dv_g
\end{equation}

The best constant in this inequality, the minimum of the values of $A$ such that there exists $B$ for which the
inequality holds for every $u$,  was obtained by  H.  Jaber in \cite{Jaber3}. 
In \cite{Cheikh} H. Cheikh Ali studies the second best constant for the Hardy-Sobolev inequality on a closed 
Riemannian manifold.

It is also important to consider the more general equation

\begin{equation}\label{H-Sequation-q}
\Delta_g u(x)+K(x)u(x)= \frac{u^{q-1}(x)}{\big(d_g(x_0,x)\big)^s},
\end{equation}

\noindent
for any $q > 2$.

In \cite{Chen} W. Chen studies the existence of blow up solutions, considering $q$ close to the
critical exponent $2^* (s)$.

It is natural to consider the generalization of Equation \eqref{H-Sequation-q} 
when the singularity occurs on a positive dimensional embedded  submanifold 
$S$ of $M$:
	
\begin{equation}\label{H-SequationSubmanifold}
\Delta_g u(x)+K(x)u(x)=\frac{u^{q-1}(x)}{\left(d_g(S,x)\right)^s},
\end{equation}

\noindent
where $d_g (S, -) :M \rightarrow \R$ denotes the distance to $S$.

In \cite{Thiam}, E. H. A. Thiam studies positive solutions of Equation (\ref{H-SequationSubmanifold}) on a open subset of Euclidean space when
$S$ is a submanifold. Positive solutions when $S$ is  a curve are also studied by M. M. Fall and E. H. A. Thiam 
in \cite{Fall}.

H. Mesmar in \cite{Mesmar},  studied Equation \eqref{H-SequationSubmanifold} (including a perturbation term) in the case when $S$ is a minimal orbit of positive dimension 
of an action of a subgroup of isometries of $(M,g)$.
	
\medskip
	
In this article we are going to address the existence of solutions to Equation \eqref{H-SequationSubmanifold}  when $S$ is the union of focal submanifolds of an isoparametric function of $(M,g)$. Isoparametric functions on general Riemannian manifolds were introduced by Q. M. Wang in
\cite{Wang} , following the classical study in the case of space forms. 

A non-constant smooth function $f:M\longrightarrow [t_0,t_1]$ is called {\it isoparametric} if there exist $a$ and $b$  smooth functions such that  
	\begin{equation}\label{Cgradient}\| \nabla f \|^2 = b \circ f
	\end{equation}
	and
	\begin{equation}\label{Claplacian}
	\Delta_{g} f = a\circ f.
	\end{equation}

Equation \eqref{Cgradient} says that regular level sets of $f$ are parallel 
(constant distance) hypersurfaces. On the other hand, Equation \eqref{Claplacian}  and  \eqref{Cgradient} imply that the level sets of $f$ have constant mean curvature. 

Q. M. Wang proves in \cite{Wang} that the only critical values of an isoparametric function on a closed Riemannian manifold are the global maximum and minimum. 
The extremal level sets, $M_i=f^{-1}(t_i)$ ($i=0,1$), are embedded submanifolds, and they are known as  the {\it focal submanifolds} of $f$. We denote by $d_i=\dim(M_i)$ and by  
$k(f)=\min\{ d_0 , d_1  \}$. We let $S=M_0 \cup M_1$.
	
 We say that an isoparametric function is {\it proper} if the codimension of each focal submanifolds is greater than 1. J. Ge and Z. Tang proved in \cite{Ge-Tang1} that all the level sets of a proper isoparametric function are connected. 
    
 The family of closed Riemannian manifolds with isoparametrics functions is very rich. The case of the  sphere $S^n\subset \R^{n+1}$ with the metric of constant curvature is
the most studied case, see for instance the survey article by Q. -S. Chi \cite{Chi}.  In a more general situation, C. Qian and Z. Tang 
proved in \cite{Qian-Tang} that  if $f$ is any Morse-Bott function on a manifold $M$, whose critical level sets are two connected submanifolds of codimension greater than one, then there exists a Riemannian metric $g$ on $M$ for which $f$  is a proper isoparametric function.

Let $f$ be a proper isoparametric function of $(M,g)$, and let $D=d_g(M_0,M_1)$.  We say that  $u$ is an $f-$invariant function,  if it is constant along the level sets of $f$. This is equivalent to saying that there exists a real valued function $\varphi$ such that 
    \[u(x)=\varphi \circ {\bf{d}}(x),\]
	where ${\bf d}:M\longrightarrow [0,D]$ denote the distance to $M_0$.
	
	By $S_f$ and  $H^2_{1,f}(M)$ we denote the set of $f-$invariant functions and the set of $f-$invariant functions that belong to the Sobolev space $H^2_1(M)$, respectively.

\medskip

We will study solutions of the equation 

\begin{equation}\label{q} 
		\Delta_g u(x)+K(x)u(x)=\frac{u^{q-1}(x)}{\left(d(x)\right)^s},
		\end{equation} 
\noindent 
where $s\in (0,2)$, $q>2$, $d(x)=d_g (S,x)$ and $K$ is an $f$-invariant continuous function. We let
$K(x) = {\bf k} ({\bf d}  (x))$, ${\bf k} : [0 , D ] \rightarrow \R$. 
Isoparametric functions appear naturally in the study of certain partial differential equations on
Riemannian manifolds since they give a reduction of the equation to an ordinary differential equation.
In Section 2 we will show that an invariant function $u=\varphi \circ {\bf d}$ solves Equation (\ref{q}) if and only if
$\varphi $ solves the ordinary differential equation: 

\begin{equation}\label{O}
\varphi '' (t) - m(t) \varphi ' (t) -{\bf k} (t) \varphi (t) +  \frac{\varphi^{q-1} (t)}{(d(t))^s} =0 ,
\end{equation}

\noindent
where $m(t)$ denotes the mean curvature of ${\bf d}^{-1}(t)$ and $d : [0,D] \rightarrow \R$, $d(t)$ denotes the distance of ${\bf d}^{-1} (t)$  to 
$S= M_0 \cup M_1$: $d(t)=t$ if $t\in [0,D/2]$ and $d(t)=D-t$ if $t\in [D/2,D]$.

Solutions
of this equation are not  $C^2$ at the endpoints, and maximum principles  apriori are not available.
In Section 2 we will prove:

\begin{Theorem}
If $u\in C^{0} (M) \cap C^2 (M-S)$ is a non-negative $f$-invariant solution of Equation (\ref{q}), then $u\equiv 0$ or $u>0$. 
\end{Theorem}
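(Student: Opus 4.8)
The key question is: given a non-negative $f$-invariant solution $u = \varphi\circ\mathbf{d}$ on $M$, which is $C^2$ away from the singular set $S$, why can't $u$ vanish at an interior point without being identically zero? The equation $\Delta_g u + Ku = u^{q-1}/d^s$ is degenerate at $S$, but away from $S$ it's a perfectly nice linear-looking elliptic equation with continuous (indeed smooth, since $\varphi$ solves the ODE away from endpoints) coefficients.

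Here is the plan.

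\medskip

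\textbf{Step 1: Reduce to the ODE.}
By the computation in Section~2, $u = \varphi\circ\mathbf{d}$ solves \eqref{q} on $M - S$ if and only if $\varphi$ solves the ODE \eqref{O} on the open interval $(0,D)$, and $u \geq 0$ means $\varphi \geq 0$ on $[0,D]$. So it suffices to prove: if $\varphi \in C^0([0,D]) \cap C^2((0,D))$ is non-negative and solves \eqref{O}, then either $\varphi \equiv 0$ or $\varphi > 0$ on all of $[0,D]$.

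\medskip

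\textbf{Step 2: Strong maximum principle in the open interval.}
On $(0,D)$ the function $d(t)>0$ and $\varphi \geq 0$, so rewrite \eqref{O} as a linear ODE for $\varphi$:
\[
\varphi'' - m(t)\varphi' - c(t)\varphi = 0, \qquad c(t) := \mathbf{k}(t) - \frac{\varphi^{q-2}(t)}{d(t)^s}.
\]
Here $m$ and $c$ are continuous on $(0,D)$. Suppose $\varphi(t_*) = 0$ for some $t_* \in (0,D)$. Since $\varphi \geq 0$, $t_*$ is an interior minimum, so $\varphi'(t_*) = 0$. By the uniqueness theorem for linear second-order ODEs (Picard–Lindelöf on any compact subinterval of $(0,D)$ where $m,c$ are bounded and continuous), the only solution with $\varphi(t_*) = \varphi'(t_*) = 0$ is $\varphi \equiv 0$ on $(0,D)$, hence on $[0,D]$ by continuity. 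Therefore if $\varphi \not\equiv 0$, then $\varphi > 0$ on all of $(0,D)$.

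\medskip

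\textbf{Step 3: Positivity at the endpoints $t = 0$ and $t = D$.}
This is the delicate part, precisely because the ODE degenerates there and standard maximum principles do not apply. Suppose $\varphi \not\equiv 0$, so $\varphi > 0$ on $(0,D)$, but $\varphi(0) = 0$. Near $t = 0$ one has $d(t) = t$, and the mean curvature $m(t)$ of the tube $\mathbf{d}^{-1}(t)$ behaves like $m(t) = k_0/t + O(1)$ as $t \to 0^+$, where $k_0 = d_0 = \dim M_0 > 0$ is (up to sign conventions) the codimension-type quantity coming from the focal submanifold $M_0$; this is a standard fact about the metric in Fermi coordinates around a submanifold, and it is what makes $u$ genuinely $C^0$ but not $C^2$ at $S$. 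Near $t=0$ the ODE reads, to leading order,
\[
\varphi'' - \frac{k_0}{t}\varphi' = \mathbf{k}(t)\varphi - \frac{\varphi^{q-1}}{t^s} + (\text{lower order}).
\]
The plan is to integrate: multiplying by the integrating factor $t^{-k_0}$ gives $(t^{-k_0}\varphi')' = t^{-k_0}\bigl(\mathbf{k}\varphi - \varphi^{q-1}/t^s + \cdots\bigr)$. Since $u \in C^0(M)$ and is $f$-invariant, $\varphi'$ must remain bounded or at worst the right-hand side is integrable near $0$ (using $s < 2$, so $t^{-k_0 - s}\varphi^{q-1}$ with $\varphi \to 0$ is controlled — here one uses that $\varphi(t) \to 0$ forces $\varphi^{q-1}$ to be small). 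One then shows that $\varphi'(t) \geq 0$ for $t$ small (so $\varphi$ is nondecreasing near $0$, consistent with $\varphi(0)=0$ and $\varphi>0$ nearby), integrate once more to get a lower bound $\varphi(t) \geq c\, t^{k_0+1}$ or similar, and then derive a contradiction with $\varphi(0) = 0$ — or, more directly, use a barrier/comparison argument near the endpoint. Concretely: if $\varphi(0) = 0$, consider the smallest $t$; by Step~2's uniqueness applied on shrinking intervals together with the sign of the integrating-factor identity, one forces either $\varphi \equiv 0$ near $0$ (contradicting $\varphi>0$ on $(0,D)$) or $\varphi(0) > 0$. The same argument at $t = D$ with $k_1 = \dim M_1$ handles the other endpoint.

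\medskip

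\textbf{Main obstacle.}
Step~2 is routine ODE uniqueness. The real work is Step~3: controlling the singular term $m(t) \sim k_0/t$ (and the $t^{-s}$ weight) at the focal submanifolds to push positivity all the way to the boundary, given only that $u$ is continuous (not $C^2$) there. The precise asymptotics of the mean curvature of parallel hypersurfaces near a focal submanifold, and an integration-by-parts / integrating-factor estimate showing the right-hand side of \eqref{O} is integrable near the endpoints (crucially using $s<2$ and $\varphi \to 0$), are where the care is needed. I would expect the paper to package this as a direct integration argument on the ODE rather than invoking a general PDE maximum principle, exactly because the classical maximum principle is unavailable at $S$, as the authors note.
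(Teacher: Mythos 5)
Your Steps 1 and 2 coincide with the paper's opening move: an interior zero of $\varphi$ forces $\varphi'=0$ there, and uniqueness for the (regular, linear-in-$\varphi$) ODE on compact subintervals of $(0,D)$ gives $\varphi\equiv 0$. The problem is Step 3, which you correctly identify as the heart of the matter but do not actually carry out, and which is built on incorrect asymptotics. The paper's Equation \eqref{asymptoticM_0} gives $m(t)=-\tfrac{n-d_0-1}{t}+A(t)$ with $A(t)\to 0$ and $n-d_0-1\geq 1$ (properness); you instead posit $m(t)=k_0/t+O(1)$ with $k_0=\dim M_0>0$, which has both the wrong sign and the wrong coefficient (the relevant quantity is the codimension minus one, not the dimension of $M_0$; note also that $M_0$ may be a point). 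This is not a cosmetic slip: with the correct sign the leading-order operator near $t=0$ is $\varphi''+\tfrac{n-d_0-1}{t}\varphi'$, the radial Laplacian in dimension $n-d_0\geq 2$, whose second fundamental solution ($t^{2-(n-d_0)}$, or $\log t$) is unbounded at $0$ --- and that unboundedness is exactly the mechanism that rules out $\varphi(0)=0$. With your sign, $\varphi''-\tfrac{k_0}{t}\varphi'=0$ has the two bounded solutions $1$ and $t^{k_0+1}$, both compatible with vanishing at $0$, so no contradiction can come out of the integrating-factor route you sketch. In addition, your auxiliary claims (that $\varphi'$ stays bounded because $u\in C^0(M)$, and that $t^{-k_0-s}\varphi^{q-1}$ is integrable because $\varphi\to 0$) are unjustified and not needed.

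For comparison, here is what the paper actually does at the endpoint. Assuming $\varphi(0)=0$ and $\varphi>0$ on $(0,D)$, write the equation as $\varphi''-m\varphi'+h\varphi=0$ with $h(t)=-{\bf k}(t)+\varphi(t)^{q-2}/t^s$, and set $v(t)=e^{-\alpha t}\varphi(t)$, which satisfies $v''+(2\alpha-m)v'+(\alpha^2-\alpha m+h)v=0$. Since $-m(t)\to+\infty$ as $t\to 0^+$ and $h$ is bounded below, one can choose $\alpha$ so large that both coefficients are positive on $(0,D/2)$; then every critical point of $v$ is a strict local maximum, so $v'>0$ on some $(0,t_1)$. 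Using $2\alpha-m(t)>1/t$ near $0$ one gets $(tv'(t))'<0$, hence $tv'(t)\geq t_2v'(t_2)=C>0$, hence $v'(t)\geq C/t$, which upon integration forces $v(t)\to-\infty$ as $t\to 0^+$, contradicting $v\geq 0$. Your proposal would need to be corrected to this (or an equivalent) argument before Step 3 can be considered proved.
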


We denote by
\[2^{*}_f(s):=\frac{2(n-k(f)-s)}{(n-k(f)-2)},\] 

\noindent
and note that if $k(f) \geq 1$ then $2^{*}_f(s) > 2^* (s)$. In Section 4 we will study the regularity of $f$-invariant 
solutions to Equation (\ref{q})
when $q\in (2, 2^*_f (s)$. In particular we will prove that if $u$ is a nonnegative $f$-invariant weak solution then
$u\in C^0 (M) \cap C^2 (M-S)$, so the previous theorem applies.

In Section 5 we will consider the functional $Q:H^2_{1,f}(M)-\{0\}\longrightarrow \R$ defined by
\[Q(u)=\frac{\int_M |\nabla u(x)|^2_g+K(x)u^2(x)dv_g}{\Big(\int_M\frac{|u(x)|^{q}}{d^s(x)}dv_g\Big)^{\frac{2}{q}}},\]
whose critical points are $f$-invariant, weak solutions of Equation (\ref{q}). We will prove the
existence of a positive  $f$-invariant solution that minimizes $Q$:

\begin{Theorem}\label{minimal}  Let $f$ be a proper isoparametric function of $(M,g)$, $s\in (0,2)$ and let 
$q\in (2,2^*_f (s) )$ .  Let $K\in C^0_f(M)=C^{0}(M)\cap S_f$ such that $\Delta_g+K$ is a coercive operator. 
Then,  there exist a positive  $f-$invariant weak solutions of Equation (\ref{q}).

\end{Theorem}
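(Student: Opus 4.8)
The plan is to obtain the solution by the direct method of the calculus of variations. Since $Q$ is $0$-homogeneous, minimizing $Q$ over $H^2_{1,f}(M)\setminus\{0\}$ is equivalent to minimizing the quadratic functional $I(u)=\int_M\big(|\nabla u|^2_g+Ku^2\big)\,dv_g$ over the constraint set $\mathcal C=\{\,u\in H^2_{1,f}(M):\ G(u):=\int_M|u|^q\,d^{-s}\,dv_g=1\,\}$; put $\mu=\inf_{\mathcal C}I$. Coercivity of $\Delta_g+K$ gives $\Lambda>0$ with $I(u)\ge\Lambda\|u\|^2_{H^2_1(M)}$ for all $u\in H^2_1(M)$, and the continuous weighted Sobolev embedding $H^2_{1,f}(M)\hookrightarrow L^q(M,d^{-s}\,dv_g)$ (valid for $q\le 2^*_f(s)$) gives $G(u)\le C\|u\|^q_{H^2_1(M)}$. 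Together these show $0<\mu<+\infty$: $\mathcal C$ is nonempty (normalise any nonzero element of $H^2_{1,f}(M)$), and if $\mu=0$ a minimizing sequence would tend to $0$ in $H^2_1(M)$, hence have $G\to 0$, contradicting $G\equiv 1$ on $\mathcal C$.

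The decisive ingredient is that, for $q\in(2,2^*_f(s))$, the embedding $H^2_{1,f}(M)\hookrightarrow L^q(M,d^{-s}\,dv_g)$ is \emph{compact}; this is where $f$-invariance is used in an essential way, and I expect it to be the main technical point. An $f$-invariant function is $u=\varphi\circ\mathbf d$, so by the coarea formula $\int_M F(u)\,dv_g=\int_0^D F(\varphi(t))\,v(t)\,dt$, where $v(t)=\mathrm{Vol}\big(\mathbf d^{-1}(t)\big)$; the tube structure around the focal submanifolds gives $v(t)\asymp t^{\,n-d_0-1}$ as $t\to 0$ and $v(t)\asymp (D-t)^{\,n-d_1-1}$ as $t\to D$, while the weight equals $d(t)^{-s}=\min(t,D-t)^{-s}$. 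Near $M_i$ one is thereby reduced to a one-dimensional Hardy-Sobolev problem of effective dimension $n-d_i\,(\ge n-k(f))$, whose embedding into the corresponding weighted $L^q$-space is compact precisely when $q<\tfrac{2(n-d_i-s)}{n-d_i-2}$; since $s<2$, the smaller of these two thresholds is $\tfrac{2(n-k(f)-s)}{n-k(f)-2}=2^*_f(s)$. Thus $q<2^*_f(s)$ yields compactness near both focal submanifolds, while away from a neighbourhood of $S$ the weight is bounded and ordinary Rellich-Kondrachov applies; the same reduction produces the continuous embedding used above.

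Granting this, take $(u_j)\subset\mathcal C$ with $I(u_j)\to\mu$. By coercivity $(u_j)$ is bounded in $H^2_{1,f}(M)$, which is a closed subspace of $H^2_1(M)$ since weak $H^2_1$-limits of $f$-invariant functions are $f$-invariant; along a subsequence $u_j\rightharpoonup u$ in $H^2_{1,f}(M)$. The compact embedding gives $u_j\to u$ in $L^q(M,d^{-s}\,dv_g)$, so $G(u)=1$ and in particular $u\ne 0$; Rellich-Kondrachov gives $u_j\to u$ in $L^2(M)$, hence $\int_M Ku_j^2\to\int_M Ku^2$, and weak lower semicontinuity of $u\mapsto\int_M|\nabla u|^2_g$ yields $I(u)\le\liminf_j I(u_j)=\mu$. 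So $u$ minimizes $I$ on $\mathcal C$. Replacing $u$ by $|u|$ (which also lies in $H^2_{1,f}(M)$ and has the same $I$- and $G$-values, using $\big|\nabla|u|\big|=|\nabla u|$ a.e.), we may assume $u\ge 0$. The Lagrange multiplier rule shows that $u$ weakly solves $\Delta_g v+Kv=\mu\,v^{q-1}d^{-s}$; after the rescaling $v\mapsto\mu^{1/(q-2)}v$, $u$ becomes a nonnegative $f$-invariant critical point of $Q$, and by the construction of $Q$ recalled just before the statement of Theorem~\ref{minimal} such a critical point is a weak solution of Equation~\eqref{q}. Moreover $u\not\equiv 0$, because $G(u)>0$.

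Finally, by the regularity result of Section~4 this solution lies in $C^0(M)\cap C^2(M-S)$, so the Theorem proved in Section~2 applies and, being nonnegative and not identically zero, it is strictly positive on $M$; this is the desired positive $f$-invariant solution. To summarise, the direct-method minimization is routine, while the step that genuinely requires work is establishing the compact weighted embedding for $f$-invariant functions in the sharp range $q<2^*_f(s)$: this rests on the coarea reduction to a one-dimensional problem together with the precise volume asymptotics of the level hypersurfaces $\mathbf d^{-1}(t)$ at the focal submanifolds.
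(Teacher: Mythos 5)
Your proof follows essentially the same strategy as the paper's: minimize the Rayleigh-type quotient over $f$-invariant functions, use the compactness of the embedding $H^2_{1,f}(M)\hookrightarrow L^q(M,d^{-s})$ for $q<2^*_f(s)$ to pass to the limit along a (nonnegative) minimizing sequence, invoke the regularity results of Section 4, and conclude strict positivity from Theorem 1.1. The one place where you genuinely diverge is in how you justify the key embedding lemma: you reduce to a one-dimensional radial Hardy--Sobolev problem via the coarea formula, $\int_M F(u)\,dv_g=\int_0^D F(\varphi(t))\,\Vol({\bf d}^{-1}(t))\,dt$, together with the tube asymptotics $\Vol({\bf d}^{-1}(t))\asymp t^{\,n-d_i-1}$ near the focal submanifolds, whereas the paper (Proposition \ref{R-Kgeneral}) works with Fermi charts, the Euclidean Hardy inequality, and a H\"older interpolation between $L^2(M,d^{-2})$ and the unweighted invariant Sobolev embedding of \cite{H}, with compactness obtained by a truncation and dominated-convergence argument. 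Both routes are legitimate; yours is more transparent about why the threshold is exactly $2^*_f(s)$ (the effective dimension $n-d_i$ near $M_i$), while the paper's avoids having to control the coarea density globally and handles the case $n-k(f)\le 2$ uniformly. One caveat you inherit from the paper itself: to get $u\in C^2(M-S)$ (needed to apply Theorem 1.1) the regularity results require $K\in C^{0,\delta}_f(M)$, which is slightly stronger than the $K\in C^0_f(M)$ hypothesis in the statement; with $K$ merely continuous one only obtains $C^{1,\beta}_{loc}(M-S)$, so either strengthen the hypothesis (as the paper does in its Theorem \ref{minimizing}) or argue that $C^1$ regularity suffices for the ODE maximum-principle argument.
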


In Section 4 we will prove the following regularity result for solutions of Equation (\ref{q}):

\begin{Theorem}	Assume that $s\in (0,2)$ and $q\in (2, 2^*_f (s))$.
Any weak solution $u$ of Equation \eqref{q}  belongs to $C^{0,\alpha}(M)$ with $\alpha\in (0,\min(1,2-s))$, and  to $C_{loc}^{1,\beta}(M-S)$ with $\beta\in (0,1)$. 
Moreover, if $K\in C^{0,\delta}_f(M)=C^{0,\delta}(M)\cap S_f$ for some $\delta\in (0,1)$ then any non-negative weak solution of Equation \eqref{q} belongs to  $C^{2,\delta}_{loc}(M-S)$.
\end{Theorem}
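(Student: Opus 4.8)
The plan is to run a regularity bootstrap that treats the singular weight along $S$ and the nonlinearity $u^{q-1}$ separately. Throughout I take $u\in H^2_{1,f}(M)$, the class relevant to this section, so that the Hardy--Sobolev inequality for $f$-invariant functions (the weighted Sobolev inequality underlying the compactness used in Theorem \ref{minimal}) is at my disposal; the hypothesis that $f$ is proper, i.e. $\mathrm{codim}\,S\geq 2$, enters precisely to make $d^{-s}$ locally integrable and the measure $d^{-s}\,dv_g$ finite, since $s<2\leq\mathrm{codim}\,S$. I split the argument into three steps: first $u\in L^\infty(M)$; then $u\in C^{0,\alpha}(M)$; then interior regularity on $M-S$. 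The serious point is the second step.

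\emph{Step 1 (boundedness).} With $\mu:=d^{-s}\,dv_g$, the Hardy--Sobolev inequality gives a continuous embedding $H^2_{1,f}(M)\hookrightarrow L^{2^*_f(s)}(M,\mu)$, and $\mu(M)<\infty$. I would run a Moser iteration measured against $\mu$: testing \eqref{q} with $\phi=\min(|u|,L)^{2\beta}u$, letting $L\to\infty$, and writing $w=|u|^{\beta}u$, the terms coming from $K$ and from $u^2$ are absorbed using $dv_g\leq D^s\,d\mu$, leaving
\[
\|w\|_{L^{2^*_f(s)}(\mu)}^{2}\ \leq\ C(\beta+1)\Big(\|w\|_{L^{2}(\mu)}^{2}+\int_{M}|u|^{q-2}\,|w|^{2}\,d\mu\Big).
\]
By H\"older, $\int_{M}|u|^{q-2}|w|^{2}\,d\mu\leq\|u\|_{L^{(q-2)r}(\mu)}^{q-2}\,\|w\|_{L^{2r'}(\mu)}^{2}$, and here the subcriticality $q<2^*_f(s)$ is exactly what allows one to choose the exponent $r$ so that simultaneously $(q-2)r\leq 2^*_f(s)$ (hence the first factor is a finite constant, by the embedding) and $2r'<2^*_f(s)$ (hence the second factor can be interpolated between $L^{2}(\mu)$ and $L^{2^*_f(s)}(\mu)$ and absorbed). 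The resulting recursion on $\|u\|_{L^{p_k}(\mu)}$, with $p_{k+1}=\tfrac{2^*_f(s)}{2}p_k$, has summable exponents, so it closes and gives $u\in L^\infty(M)$.

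\emph{Step 2 (global H\"older continuity).} Once $u\in L^\infty(M)$, Equation \eqref{q} reads $(\Delta_g+1)u=F$ with $|F|\leq C(1+d^{-s})$, $C=C(\|u\|_\infty,\|K\|_\infty,q)$. Using the Green function $G$ of the coercive operator $\Delta_g+1$, which satisfies $|G(x,y)|\leq C\,d_g(x,y)^{2-n}$ and $|\nabla_xG(x,y)|\leq C\,d_g(x,y)^{1-n}$, one writes $u(x)=\int_M G(x,y)F(y)\,dv_g(y)$, and a standard kernel-difference estimate reduces the H\"older continuity of $u$ to that of the Riesz-type potential $v(x):=\int_M d_g(x,y)^{2-n}\,d(y)^{-s}\,dv_g(y)$. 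Working in Fermi coordinates along $S$, the angular integral $\int_{S^{n-1}}|\omega''|^{-s}\,d\omega$ of the normal component over the unit sphere is finite because $s<\mathrm{codim}\,S$, and a direct computation then yields $|v(x)-v(x_0)|\leq C\,d(x)^{2-s}$ when $x$ is near $S$ with nearest point $x_0\in S$ (the exponent becoming $1$, up to a logarithm, when $s\leq 1$), while on compact subsets of $M-S$ the potential $v$ is Lipschitz. Hence $v$, and therefore $u$, lies in $C^{0,\alpha}(M)$ for every $\alpha<\min(1,2-s)$; the exponent $\min(1,2-s)$ is thus just the H\"older regularity of the Newtonian potential of $d^{-s}$ transverse to $S$. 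I expect this step --- carrying the potential estimate out uniformly across $S$, where no maximum principle is available --- to be the main obstacle.

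\emph{Step 3 (interior regularity on $M-S$).} On $M-S$ the distance $d$ is smooth and bounded below on compact subsets, so $u$ solves $\Delta_g u=F$ with $F=-Ku+u^{q-1}d^{-s}$, and by Step 2 and the continuity of $K$ one has $F\in C^{0}_{loc}(M-S)$. Interior $L^p$ (Calder\'on--Zygmund) estimates give $u\in W^{2,p}_{loc}(M-S)$ for all $p<\infty$, hence $u\in C^{1,\beta}_{loc}(M-S)$ for all $\beta\in(0,1)$. If in addition $K\in C^{0,\delta}_f(M)$ and $u\geq 0$, then since $q-1>1$ the map $t\mapsto t^{q-1}$ is $C^1$ on $[0,\infty)$, so $u^{q-1}\in C^{1,\beta'}_{loc}(M-S)$; together with $Ku\in C^{0,\delta}_{loc}(M-S)$ and $d^{-s}\in C^{\infty}(M-S)$ this gives $F\in C^{0,\delta}_{loc}(M-S)$, and Schauder estimates yield $u\in C^{2,\delta}_{loc}(M-S)$, completing the proof.
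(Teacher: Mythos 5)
Your outline is correct, but for the central step --- H\"older continuity of $u$ across $S$ --- you take a genuinely different route from the paper, and that is precisely the step you leave as an asserted ``direct computation.'' The paper's argument uses the $f$-invariance of $u$ in an essential way: near a point of $M_i$ it passes to Fermi coordinates, observes that $u$ depends only on the normal variable $z\in\R^{n-d_i}$, and (via the auxiliary rotationally symmetric metric $g_*$ of Remark \ref{Metric}) turns \eqref{q} into an elliptic equation on a ball of dimension $n-d_i={\rm codim}\,M_i$, where the weight is $|z|^{-s}\in L^p$ for all $p<(n-d_i)/s$. Standard $H^p_2$ elliptic regularity and the Sobolev embedding \emph{in the reduced dimension} then give $C^{0,\alpha}$ for $\alpha<2-s$ with no further work, and the paper never needs $u\in L^\infty$ as a separate input (it only uses $u\in L^p$ for all $p$, quoted from \cite{Jaber, Ghoussoub-Robert}). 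Your Step 2 instead stays in dimension $n$ and estimates the Green potential of $d^{-s}$ directly. Be aware of the pitfall this must avoid: since $d^{-s}\in L^p(M)$ only for $p<\mathbf{n}/s$ with $\mathbf{n}={\rm codim}\,S$, the naive $n$-dimensional route ($L^p\to H^p_2\to C^{0,2-n/p}$) yields only $\alpha<2-ns/\mathbf{n}$, which is strictly worse than $2-s$ whenever $\dim S>0$ and can be negative; so you genuinely have to exploit the anisotropy of $d^{-s}$ (integrating the kernel over the directions tangent to $S$ collapses the potential to the $(n-\dim S)$-dimensional Newtonian potential of $|z|^{-s}$, which behaves like $|z|^{2-s}$). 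That dyadic-annulus kernel estimate is true and elementary, but it is the entire content of the step and you only gesture at it; you also need to upgrade the one-sided bound $|v(x)-v(x_0)|\le C\,d(x)^{2-s}$ (nearest point $x_0\in S$) plus the blowing-up Lipschitz bound off $S$ into a genuine two-point H\"older estimate. On the other hand, if carried out, your argument buys something the paper's does not: it never uses $f$-invariance of $u$ in the H\"older step, only $u\in L^\infty$ and the structure of $d^{-s}$, so it would apply to arbitrary bounded weak solutions. Your Step 1 (weighted Moser iteration to $L^\infty$, using $q<2^*_f(s)$ to absorb the nonlinear term) is then a necessary extra input for your route and looks sound; Step 3 coincides with the paper's Propositions \ref{regularityC1loc} and \ref{regularityC2loc}.
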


Next we will see that using the mountain pass theorem of Ambrosetti and Rabinowitz in \cite{Ambrosetti-R} we can prove the existence of infinite solutions. Moreover,
we will prove that there is a  sequence of solutions $u_i$ which is not $C^0$-bounded, $ \lim_{i\rightarrow \infty} \max (u_i )=  \infty$.

\begin{Theorem}\label{mountain pass}  Let $f$ be a proper isoparametric function of $(M,g)$.  Let $K\in C^{0,\delta}_f(M)$ such that $\Delta_g+K$ is a coercive operator. 
Then,  for any $s\in(0,2)$, $q\in (2,2^*_f (s))$,  there exist infinitely many $f-$invariant solutions of Equation (\ref{q}). Moreover, there is a sequence of solutions
$u_i$ such that  $\int_M\frac{|u_i(x)|^{q}}{d^s(x)}dv_g  \rightarrow \infty$. 
\end{Theorem}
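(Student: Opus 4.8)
The plan is to recast Equation (\ref{q}) as a variational problem on the Hilbert space $H=H^2_{1,f}(M)$ and apply the symmetric (even) mountain pass theorem of Ambrosetti and Rabinowitz \cite{Ambrosetti-R}. Since $\Delta_g+K$ is coercive, $\|u\|_K^2=\int_M\big(|\nabla u|^2_g+Ku^2\big)\,dv_g$ is a Hilbert norm on $H$ equivalent to the $H^2_1$-norm restricted to $f$-invariant functions, and on $(H,\|\cdot\|_K)$ I would consider
\[ J(u)=\tfrac12\|u\|_K^2-\tfrac1q\int_M\frac{|u|^q}{d^s}\,dv_g. \]
Here the crucial input is the embedding $H^2_{1,f}(M)\hookrightarrow L^q(M,d^{-s}\,dv_g)$, which is continuous for $q\le 2^*_f(s)$ and compact for $q<2^*_f(s)$; this is precisely where the enlarged exponent $2^*_f(s)>2^*(s)$ coming from the isoparametric reduction enters. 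Granting this, $J$ is well defined and of class $C^1$, it is even (as $u\mapsto u^2$ and $u\mapsto|u|^q$ are), $J(0)=0$, $H$ is infinite–dimensional, and — exactly as for the functional $Q$ in Section 5, via the ODE reduction of Section 2 — critical points of $J$ on $H^2_{1,f}(M)$ are $f$-invariant weak solutions of Equation (\ref{q}).

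Next I would check the three hypotheses of the symmetric mountain pass theorem. \emph{Geometry near $0$:} by the continuous embedding $\int_M|u|^q d^{-s}\,dv_g\le C\|u\|_K^q$, so $J(u)\ge\tfrac12\|u\|_K^2-\tfrac Cq\|u\|_K^q\ge\alpha>0$ on $\{\|u\|_K=\rho\}$ for $\rho$ small, since $q>2$. \emph{Coercivity fails on finite-dimensional subspaces:} for any finite-dimensional $W\subset H$, all norms on $W$ are equivalent and $u\mapsto(\int_M|u|^q d^{-s}\,dv_g)^{1/q}$ is a norm on $W$, hence $J(u)\le\tfrac12\|u\|_K^2-\tfrac{c_W}{q}\|u\|_K^q\to-\infty$ as $\|u\|_K\to\infty$ in $W$, so $J\le 0$ outside some ball $B_{R(W)}\cap W$. \emph{Palais--Smale:} if $J(u_j)$ is bounded and $J'(u_j)\to0$, then, using $q>2$,
\[ C+o(1)\|u_j\|_K\ge J(u_j)-\tfrac1q\langle J'(u_j),u_j\rangle=\Big(\tfrac12-\tfrac1q\Big)\|u_j\|_K^2, \]
so $\{u_j\}$ is bounded; passing to a subsequence, $u_j\rightharpoonup u$ in $H$ and, by the \emph{compact} embedding, $u_j\to u$ in $L^q(M,d^{-s}\,dv_g)$. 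Then $\langle J'(u_j)-J'(u),u_j-u\rangle\to0$, and since the nonlinear term $\int_M\big(|u_j|^{q-2}u_j-|u|^{q-2}u\big)(u_j-u)d^{-s}\,dv_g\to0$ by Hölder and the strong convergence, one gets $\|u_j-u\|_K\to0$. The symmetric mountain pass theorem now produces an unbounded sequence of critical values $c_j\to+\infty$, and since these values are eventually distinct, infinitely many distinct (necessarily nontrivial, as $c_j\ge\alpha>0=J(0)$) $f$-invariant weak solutions $u_j$ of Equation (\ref{q}).

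For the blow-up assertion, test $J'(u_j)=0$ against $u_j$ to obtain $\|u_j\|_K^2=\int_M|u_j|^q d^{-s}\,dv_g$, whence
\[ c_j=J(u_j)=\Big(\tfrac12-\tfrac1q\Big)\int_M\frac{|u_j|^q}{d^s}\,dv_g\longrightarrow\infty, \]
which is exactly the claimed statement. Finally, the regularity theorem of Section 4 (applicable since $K\in C^{0,\delta}_f(M)$) upgrades each $u_j$ to an element of $C^{0,\alpha}(M)\cap C^{1,\beta}_{loc}(M-S)$, so that the $u_j$ are genuine (not merely weak) solutions; for the non-negative ones one even gets $C^{2,\delta}_{loc}(M-S)$ and the positivity theorem of Section 2 applies.

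The main obstacle is the compact embedding $H^2_{1,f}(M)\hookrightarrow L^q(M,d^{-s}\,dv_g)$ for $q<2^*_f(s)$, which underpins the Palais--Smale condition and is specific to this geometry: after the $f$-invariant reduction the problem becomes an essentially one-dimensional weighted problem in which the volume element degenerates like $d(t)^{k(f)}$ near the lower-dimensional focal submanifold, so the effective dimension drops from $n$ to $n-k(f)$ and the critical Hardy--Sobolev exponent improves from $2^*(s)$ to $2^*_f(s)$. Once that compactness is secured, the rest is the standard Ambrosetti--Rabinowitz minimax machinery.
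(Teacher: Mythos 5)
Your proposal is correct and follows essentially the same route as the paper: the same functional $J^q$ on $H^2_{1,f}(M)$, the mountain-pass geometry and Palais--Smale condition verified via the compact embedding $H^2_{1,f}(M)\hookrightarrow L^q(M,d^{-s})$ of Proposition \ref{R-Kgeneral}, the symmetric minimax scheme of Ambrosetti--Rabinowitz, and the identity $J^q(u)=(\tfrac12-\tfrac1q)\int_M |u|^q d^{-s}\,dv_g$ at critical points to get the blow-up of the weighted $L^q$ norms. The only difference is one of delegation: you quote the unboundedness of the critical values as part of the symmetric mountain pass theorem's conclusion, whereas the paper establishes $c_m\to\infty$ explicitly by bounding $c_m\geq \delta d_m^2$ with $d_m=\inf_{E_m^\perp\cap N}\|\cdot\|_K\to\infty$ on the Nehari-type set $N$.
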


\begin{Remark} In the case $s=0$ the equation does not have a singularity. It corresponds to Yamabe type equations which have been extensively studied 
in the case of Riemannian manifolds. Although most of our proofs work without any change in the case $s=0$ we do not include this case 
for simplicity and because this case has been studied before. In particular  the previous result in the case $s=0$  was proved by J. Julio-Batalla in \cite{Batalla}.
\end{Remark}

In Section 6 we will prove that there is a $C^0$-bound for all positive solutions of Equation (\ref{q}) and we obtain:

\begin{Corollary}\label{nodalsolutions} Given a proper isoparametric function on $(M,g)$, $s\in (0,2)$, $q\in (2,2^*_f (s))$, if
$K\in C^{0,\alpha}_f (M)$ is such that $\Delta_g +K$ is coercive, then
there exist infinitely many nodal solutions, i.e., solutions that change sign, to the Equation \eqref{q}.
	\end{Corollary}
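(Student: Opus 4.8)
The plan is to combine Theorem \ref{mountain pass} with a uniform $C^0$-bound for \emph{positive} $f$-invariant solutions, the latter being the content promised for Section 6. By Theorem \ref{mountain pass} we already have infinitely many $f$-invariant solutions $u_i$ of Equation \eqref{q}, together with the divergence $\int_M |u_i|^q / d^s \, dv_g \to \infty$. If I can show that every \emph{positive} (equivalently, by Theorem 2.1, every nonnegative nontrivial) $f$-invariant solution $u$ satisfies $\max_M u \le C$ for a constant $C=C(M,g,K,q,s)$ independent of the solution, then it follows that $\int_M u^q/d^s \, dv_g$ is bounded over all positive solutions: indeed $\int_M u^q/d^s \, dv_g \le (\max u)^q \int_M d^{-s}\, dv_g \le C^q \int_M d^{-s}\,dv_g$, and the last integral is finite because $s\in(0,2)<\mathrm{codim}(S)$ along each focal submanifold — this is exactly the integrability behind the compact embedding discussed in the introduction. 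Consequently, for all $i$ large enough the solution $u_i$ \emph{cannot} be positive, nor can $-u_i$ be positive (the equation is odd in $u$, so $-u_i$ is also a solution and the same bound applies). Hence $u_i$ changes sign for all large $i$, and since there are infinitely many of them we obtain infinitely many nodal solutions.

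The first step, then, is to record the odd symmetry of Equation \eqref{q}: since $q>2$ and the nonlinearity $u \mapsto u^{q-1}$ is taken in the odd sense ($|u|^{q-2}u$ in the variational formulation, matching the functional $Q$ which uses $|u|^q$), if $u$ solves \eqref{q} then so does $-u$. Thus a solution that is not nodal is, up to sign, a nonnegative solution, hence by Theorems 2.1 and the regularity theorem (the $C^0\cap C^2(M-S)$ statement) a positive solution in $C^0(M)\cap C^2(M-S)$. The second step is to invoke the uniform bound from Section 6 for such positive solutions. The third step is the finiteness of $\int_M d^{-s}\,dv_g$, which follows from the coarea/tubular-neighbourhood estimate near $S$: near each focal submanifold $M_j$ of codimension $c_j = n-d_j \ge 2 > s$, one has $dv_g \sim r^{c_j-1}\,dr\,d(\text{area})$ in Fermi coordinates, so $\int d^{-s} \sim \int_0^{\epsilon} r^{c_j-1-s}\,dr < \infty$; away from $S$ the integrand is bounded. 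The fourth step assembles the contradiction as above.

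The main obstacle is the uniform $C^0$-bound for positive solutions, but that is explicitly stated as being established in Section 6 (\textquotedblleft there is a $C^0$-bound for all positive solutions\textquotedblright), so for the purposes of this corollary I may assume it. The only genuine subtlety in the assembly is making sure the divergence statement in Theorem \ref{mountain pass} is stated for the \emph{same} quantity $\int_M |u_i|^q/d^s\, dv_g$ that one bounds for positive solutions — it is — and that passing from $u_i$ to $-u_i$ is legitimate, which it is by oddness. One should also note that $K\in C^{0,\alpha}_f(M)$ in the hypothesis of the corollary is exactly what is needed to apply Theorem \ref{mountain pass} (whose hypothesis is $K\in C^{0,\delta}_f(M)$) and the $C^{2,\delta}_{loc}(M-S)$ regularity, so no extra assumption is required. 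This yields the claimed infinitely many nodal solutions.
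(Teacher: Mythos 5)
Your proposal is correct and follows essentially the same route the paper intends: combine the divergence $\int_M |u_i|^q d^{-s}\,dv_g \to \infty$ from Theorem \ref{mountain pass} with the uniform $C^0$-bound for non-negative $f$-invariant solutions from Section 6 and the finiteness of $\int_M d^{-s}\,dv_g$ (codimension of $S$ at least $2>s$), using oddness of the nonlinearity to exclude both signs. The supporting observations you flag (regularity so that the Section 6 bound and Theorem 1.1 apply, and that the diverging quantity matches the bounded one) are exactly the right ones.
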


\begin{Remark} The most interesting case in our existence results is when $k(f) \geq 1$, since we cover in this situation 
supercritical exponents $q\in (2^* (s) , 2^*_f (s))$. 
Similar ideas to reduce  equations  to functions which display certain particular symmetries and consider compactness of 
embeddings of Sobolev spaces
have been considered for instance in \cite{Farkas, H, Mesmar}. 
There are no results on existence of solutions in this supercritical range in the general case. Similarly, we assume that
$K$ satisfies the condition that $\Delta_g + K$ is coercive. This is necessary to apply variational techniques and it would
be very interesting to study the situation when this condition is relaxed. 
\end{Remark}

\begin{Remark}
It is a very interesting problem to consider the limit case $q=2^*_f (s)$. This would be the critical case when restricted to
$f$-invariant functions. This critical case was consider in \cite{Mesmar} in the presence of symmetries. Different techniques used
in the critical situations can be found for instance  in \cite{FarkasFaraci, Jaber}.
\end{Remark}

The article is organized as follow. In Section 2 we exploit the fact that $f$-invariant solutions are characterized by solutions to an 
ordinary differential equation 
to prove Theorem 1.1 assuming that the solution is regular. 
We will also prove that positive invariant solutions must have a local maximum at the focal submanifolds. In Section 3 we consider the
spaces of $f$-invariant functions and study the corresponding embeddings. In Section 4 we consider the regularity of solutions
of Equation (\ref{q}). We use these results in Section 5 to prove the existence of a positive solution of Equation (\ref{q}). In Section
6 we prove that there is a $C^0$-bound for invariant positive solutions. Finally in Section 7 we prove Theorem 1.4.

	\section{Isoparametric functions}

Let  $(M,g)$ be a closed Riemannian manifold of dimension $n$.
Let $f:M\longrightarrow [t_0,t_1]$ be a proper isoparametric function, as in the introduction.
We let $M_0$ and $M_1$ be the focal submanifolds of $f$ and call $d_i= \dim(M_i)$.  We
call $S=M_0 \cup M_1$.
Since $f$ is proper we 
have that $d_i  \leq n-2$.

As in the introduction we denote by $D$ the distance between $M_0$ and $M_1$ and by
${\bf d} : M \rightarrow [0,D]$ the distance to $M_0$.  We write an $f$-invariant function 
$u$ on $M$ as
	
	\[u(x)=\varphi \circ {\bf d} (x),\]
	
\noindent 
where $\varphi :[0,D] \rightarrow \R$. It is well-known (see \cite{Betancourt} for details) that

	\begin{equation}\label{Laplacian0}
	\Delta_g u  (x) = -\varphi ''  ({\bf d} (x)) |\nabla {\bf d(x)}|^2+ \varphi' ({\bf d} (x))\Delta_g{\bf d}(x). 
	\end{equation}
	\noindent
\noindent
We also have that for $x\in M-S$

 \[\Delta_g  {\bf d}  (x)=\frac{b'(f(x))+2a(f(x))}{2\sqrt{b(f(x))}}=m({\bf d}(x)),\]

\noindent
where $b$ and $a$ are the functions that appear in Equations \eqref{Cgradient} and \eqref{Claplacian},  respectively. And $m(t)$  denotes the (constant) mean curvature of the submanifold ${\bf d}^{-1} (t)$ with respect to the normal vector field $N=\nabla f/\sqrt{b(f})$,
 $m:(0,D)\longrightarrow \R$.  See again \cite{Betancourt} for further details.

	Since $|\nabla {\bf d}|=1$, it follows  from Equation \eqref{Laplacian0}  that

	\begin{equation}\label{Laplacian}
	\Delta_g u  (x) = -\varphi ''  ({\bf d} (x)) +m( {\bf d} (x) ) \varphi '({\bf d} (x)) 
	\end{equation}

\bigskip	

	In this way,  equations involving the Laplacian are reduced, on the space of
	$f$-invariant functions, to ordinary differential equations. 
	
\bigskip

	The mean curvature of regular level sets approaches $\pm \infty$ near the focal submanifolds. The asymptotic behavior
	is well understood (see \cite{Betancourt} and \cite{Ge-Tang2}):
	
	\begin{equation}\label{asymptoticM_0}m(t) = -\frac{n-d_0 -1}{t} + A(t)
	\end{equation}
	and 
	\begin{equation}\label{asymptoticM_1}m(t) = \frac{n-d_1 -1}{D-t} + \tilde{A}(t)
	\end{equation}
	\noindent
	where $\lim_{t\rightarrow 0^+} A(t) =0$  and $\lim_{t\rightarrow D^-} \tilde{A}(t) =0$. Recall that   $n-d_i -1 \geq 1$ ($i=0,1$) since we are assuming that
	the isoparametric function is proper.  
	
	It follows from the Formula (\ref{Laplacian}) that $u=\varphi\circ {\bf d}$ is a solution of Equation \eqref{q} if and only if $\varphi$ satisfies:
	
	\begin{equation}\label{ODE1}
	\varphi '' (t) - m(t) \varphi ' (t) -{\bf k} (t) \varphi (t) +  \frac{\varphi^{q-1} (t)}{(d(t))^s} =0 ,
	\end{equation}
	
	\noindent
	where, ${\bf k}, d : [0,D] \rightarrow \R$, $K(x)= {\bf k}  ({\bf d}(x))$, $d(t)$ denotes the distance of ${\bf d}^{-1} (t)$  to 
$S= M_0 \cup M_1$:
	$d(t)=t$ if $t\in [0,D/2]$ and $d(t)=D-t$ if $t\in [D/2,D]$.

\begin{Remark}\label{Metric} Near a point  $x$ in any of the focal submanifolds $M_0$ or $M_1$ we will 
consider Fermi coordinates around $x$, relative to the focal submanifold. Assuming for instance that
$x\in M_0$ we consider first geodesic coordinates $(x_1,\dots ,x_{d_0} )$  around $x$ in $M_0$ and then 
geodesic coordinates $z=(x_{ d_0 +1},\dots ,x_n )$ in the directions normal to $M_0$.  Then 
in the coordinates $(x_1,\dots , x_n )$, $\| z \| = \| ( x_{d_0 +1},\dots ,x_n ) \|$ 
 is the distance of the point to $M_0$. 
An $f$-invariant function $u$ is written as $u = \varphi (\| z \| )$. 

It will be useful to consider an auxiliary  Riemannian metric $g_*$  on the $z$-ball of radius $\delta >0$,
$B_{\delta} := \{  z=(x_{ d_0 +1},\dots ,x_n ) : \| z \| < \delta  \}$. We let $g_0$ be the round metric
on the $(n-d_0 -1)$-sphere and consider $g_*$ of the form $g_* = dt^2 + r(t)^2 g_0 $ for some function
function $r:[0,\delta ) \rightarrow \R_{\geq 0}$, $r(0)=0$. For such metric
the mean curvature of the sphere of radius $t \in (0, \delta)$ is $-\frac{(n-d_0 -1)r' (t)}{r(t)}$.
By a simple computation, using  the asymptotic behavior of $m(t)$ near 0  (\ref{asymptoticM_0}), we 
 see that we can pick the function $r$ so that $\frac{(n-d_0 -1)r' (t)}{r(t)} = m(t)$. Moreover, 
$r$ can be chosen of the form $r(t) = t + o(t)$ so that $g_*$ is close to the Euclidean metric near the origin. 
Then it follows that for	an $f$-invariant function $u$, identified with a radial function on the
$z$-ball $B_{\delta}$,  we have
		
		$$\Delta_g u = \Delta_{g_*} u.$$

	\end{Remark}
	
	\bigskip
	
	Next we show that the maximum principle works in this situation:

{\bf Theorem 1.1}: If $u=\varphi \circ {\bf d} \in C^{0} (M) \cap C^2 (M- S )$ is an $f$-invariant non-negative solution of Equation \eqref{q} then
$\varphi \equiv 0$ or $\varphi >0$.

	\begin{proof} Let us assume that $\varphi$ is not identically zero.  If $\varphi (t)=0$ for some $t\in (0,D)$ then we must also have that $\varphi ' (t)=0$. But then, by the
		uniqueness of solutions with given initial conditions we must have that $\varphi \equiv 0$. 
		Then we can assume that $\varphi (t)>0$ for all $t\in (0,D)$. Thus, it suffices to show  that $\varphi (0)>0$ and
		$\varphi (D) >0$. Both cases are equivalent; we will  prove that  $\varphi (0)>0$. 
		Assume that $\varphi (0) =0$, so in particular $\varphi$ has a local minimum at $0$. 
		For $t\in (0,D/2)$ let $h(t) =-{\bf k} (t) +\frac{ (\varphi (t) )^{q-2} }{t^s}$. We have that 
		
	\[\varphi '' (t) -m(t) \varphi ' (t) + h(t) \varphi (t)=0\]
		\noindent for all $t\in (0,D/2)$.
		For $\alpha >0$ let
		
		\[v(t) = e^{-\alpha t} \varphi (t).\]
		
		\noindent
		Then, $v(0)=0$ and $v(t)>0$ for all $t\in (0,D/2)$.
		
		\medskip
		
		We have that $\varphi '(t) = e^{\alpha t} v'(t) +\alpha  e^{\alpha t} v(t)$ and
		$\varphi '' (t) =e^{\alpha t} v''(t) + 2 \alpha e^{\alpha t} v'(t) + \alpha^2 e^{\alpha t} v(t)$.

		
		\noindent
		Therefore,
		
		\[\varphi '' (t) - m(t) \varphi ' (t) + h(t) \varphi (t) = e^{\alpha t} \big[  v''(t) + (2\alpha -m(t)  ) v'(t)  + (\alpha^2 -\alpha m(t) + h(t)) v(t) \big] ,\]
		
		\noindent
		and then, 
		
		$$ v''(t) + \big(2\alpha -m(t)  \big) v'(t)  + \big(\alpha^2 - \alpha m(t) + h(t)\big) v(t)  = 0  .$$

		Note that $-m(t)$ is positive close to $t=0$, hence  it is bounded from below.  Also $h(t)$ is bounded below. Therefore, we can pick 
		$\alpha >0$ large enough so that $ 2\alpha -m(t) >0 $ and $\alpha^2 - \alpha m(t) + h(t) >0$ for all $t\in (0,D/2)$. 
		
		If at some $t_0 \in (0,D/2)$ we have that $v'(t_0 )=0$ we then have that $v'' (t_0 )< 0$. This implies that there is at most one critical
		point of $v$ in $(0,D/2)$, which would be a local maximum. Hence,  we can pick $t_1 \in (0,D/2)$ such that $v'(t) >0$ for
		all $t\in (0,t_1 )$. By the asymptotic behavior of $m$ at 0 (see Equation \eqref{asymptoticM_0}) we can pick $t_2 \in (0,t_1)$ such that $2\alpha- m(t) > \frac{1}{t}$ for
		all $t\in (0,t_2 )$.
		
		We have  that 
		
		\[v'' (t) +\frac{1}{t} v' (t) <  v''(t) + (2\alpha -m(t)  ) v'(t) <0.\] 
		\noindent Therefore, $ ( tv'(t) )'= t(v''(t) +(1/t) v'(t) )<0$ for all
		$t\in (0,t_2 )$. 
		
		This implies that for any $t\in (0,t_2 )$ we have that $tv'(t) > t_2 v' (t_2 ) =C >0$. Then $v' (t) > (1/t) C $. It follows
		that $\lim_{t\rightarrow 0^+} v(t) = -\infty$. Then, also $\lim_{t\rightarrow 0^+} \varphi (t) = -\infty$, which is a
		contradiction since we were assuming that $\varphi$ is a non-negative function.  Then we must have that
		$\varphi (0)>0$.
		
	\end{proof}

	When an $f$-invariant solution $u$  of Equation \eqref{q} is positive and  $u\in C^{0,\alpha} (M)$ for some $\alpha \in (0,1)$ we can prove that
	$0$ is actually a local maximum.

	\begin{Lemma}\label{Cero} For any $f$-invariant positive solution 
		$u=\varphi \circ {\bf d} \in C^{0,\alpha} (M) \cap C^2 (M- S )$ of Equation \eqref{q}, $\varphi$ has a local maximum at 0.

	\end{Lemma}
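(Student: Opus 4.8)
The plan is to work with the ordinary differential equation \eqref{ODE1} near the focal submanifold $M_{0}$ and to exploit the asymptotic expansion \eqref{asymptoticM_0} of the mean curvature. Put $N:=n-d_{0}-1$; since $f$ is proper, $N\geq 1$. First I would pass to the integrating factor
\[
\omega(t):=\exp\!\left(-\int_{t_{*}}^{t} m(\tau)\,d\tau\right),\qquad t\in(0,D/2),
\]
for a fixed small $t_{*}$, so that $\omega>0$, $\omega\in C^{1}((0,D/2))$ and $\omega'/\omega=-m$. Since $d(t)=t$ on $(0,D/2)$, Equation \eqref{ODE1} is then equivalent to
\[
\big(\omega\,\varphi'\big)'(t)=\omega(t)\left({\bf k}(t)\,\varphi(t)-\frac{\varphi^{q-1}(t)}{t^{s}}\right),\qquad t\in(0,D/2) .
\]
A direct computation from \eqref{asymptoticM_0}, using that $A$ is bounded near $0$, gives $\omega(t)\leq C\,t^{N}$ for $t$ small; this is essentially the model weight $r^{N}$ of Remark~\ref{Metric}.

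Because $u>0$ is continuous we have $\varphi(0)=:a>0$ and $\varphi(t)\to a$ as $t\to 0^{+}$, while ${\bf k}\varphi$ is bounded on $[0,D/2]$. As $s>0$, the term $\varphi^{q-1}/t^{s}$ dominates ${\bf k}\varphi$ for $t$ small, so there is $t_{0}\in(0,D/2)$ with $(\omega\varphi')'<0$ on $(0,t_{0})$. Hence $\omega\varphi'$ is strictly decreasing on $(0,t_{0})$ and has a limit $L:=\lim_{t\to 0^{+}}(\omega\varphi')(t)\in(-\infty,+\infty]$, with $(\omega\varphi')(t)<L$ for every $t\in(0,t_{0})$.

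The crux is to show $L\leq 0$. Suppose $L>0$. Then $\omega\varphi'$ is bounded below by a positive constant for $t$ near $0$, so $\varphi'(t)\geq c\,\omega(t)^{-1}\geq c'\,t^{-N}$ there; integrating from $t$ up to a fixed $t_{3}\in(0,t_{0})$ and using $N\geq 1$, hence $\int_{0}^{t_{3}}t^{-N}\,dt=+\infty$, we get
\[
\varphi(t)=\varphi(t_{3})-\int_{t}^{t_{3}}\varphi'(\tau)\,d\tau\longrightarrow -\infty\qquad (t\to 0^{+}),
\]
contradicting $\varphi(t)\to a>0$. Therefore $L\leq 0$, so $(\omega\varphi')(t)<0$ and thus $\varphi'(t)<0$ on $(0,t_{0})$. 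Consequently $\varphi$ is strictly decreasing on $(0,t_{0})$, and by continuity at $0$, $\varphi(t)<\varphi(0)$ for all $t\in(0,t_{0})$; that is, $\varphi$ has a strict local maximum at $0$.

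I expect the only real obstacle to be this last step, namely ruling out a positive blow-up of $\varphi'$ at the focal submanifold (the case $L>0$). Properness enters here in an essential way: it forces $N=n-d_{0}-1\geq 1$, which is exactly what makes the weight $\omega(t)\sim t^{N}$ decay fast enough that a positive limit of $\omega\varphi'$ would render $\varphi'$ non-integrable and $\varphi$ unbounded below; for a codimension-one focal submanifold this mechanism fails. The rest is elementary one-variable analysis, and the same argument at $t=D$ (using \eqref{asymptoticM_1}) shows $\varphi$ also has a local maximum at $D$.
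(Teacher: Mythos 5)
Your proof is correct, and it takes a genuinely different---and in fact sharper---route than the paper's. The paper first notes that $\varphi''-m\varphi'<0$ near $0$, so either $0$ is a local maximum or $\varphi'>0$ on some $(0,t_0)$; in the latter case it compares with the sub-optimal weight $t^{\beta}$, $\beta<N:=n-d_0-1$, obtaining $\varphi'(t)>Ct^{-\beta}$. This forces a case distinction: when $N>1$ one may take $\beta>1$ and the divergence of $\int_0 t^{-\beta}\,dt$ gives the contradiction, but when $N=1$ every admissible $\beta$ is $<1$, and the paper must invoke the $C^{0,\alpha}$ hypothesis to produce the competing upper bound $\varphi'(t)\leq Rt^{\alpha-1}$. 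Your exact integrating factor $\omega\asymp t^{N}$ replaces $t^{\beta}$ by the true weight, so the lower bound becomes $\varphi'\geq c't^{-N}$, and $\int_0 t^{-N}\,dt$ diverges for every $N\geq 1$, including the codimension-two case $N=1$; consequently you never use the H\"older continuity of $u$, only its continuity and positivity at $S$. The monotonicity of $\omega\varphi'$ also lets you conclude $\varphi'<0$ on all of $(0,t_0)$ in one stroke, without the paper's preliminary discussion of critical points. What the paper's version buys is that it avoids introducing the integrating factor and the extended limit $L$ (and the H\"older bound it needs is in any case supplied by Proposition \ref{regularityC0}); what yours buys is a uniform treatment of all codimensions $\geq 2$ and a slightly stronger conclusion (a strict local maximum, under weaker regularity).
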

	
	\begin{proof}
		Since $\varphi$ solves Equation \eqref{ODE1}, $d(0)=0$  and $\varphi (0) >0$ there exists $t_0 >0$ such that $\varphi ''(t) - m(t) \varphi ' (t) <0$ for
		all $t\in (0,t_0 )$.  
		
		This implies that any critical point of $\varphi$ in  $(0,t_0 )$ is a local maximum. By taking $t_0$ smaller if necessary we can assume that
		$\varphi '$ does not vanish in $(0,t_0 )$. Therefore, if $0$ is not a local maximum we must have that $\varphi ' (t) >0$ for all
		$t\in (0, t_0 )$ (and 0 is a local minimum). Assume that this is the case.

		By the asymptotic behavior of $m$ (see Equation \eqref{asymptoticM_0}), and taking 
		$\beta < n-d_0 -1$ we may assume that 
		$\varphi '' (t) + \frac{ \beta }{t} \varphi ' (t) < \varphi ''(t) - m(t) \varphi ' (t)   <0$  for any $t\in (0,t_0 )$  (taking a smaller $t_0$ if necessary). 
		Recall that we are assuming that the isoparametric function $f$ is proper, so $n-d_0 -1 \geq 1$.
		
		We have  that $(t^{\beta} \varphi ' (t) )'  = t^{\beta} \left( \varphi '' (t) + \frac{ \beta }{t} \varphi ' (t) \right)  <0$. 
		It follows then that for any $t\in (0,t_0 )$ we have \[t^{\beta} \varphi ' (t) > t_0^{\beta} \varphi ' (t_0 )=C >0,\]
		or equivalently  $\varphi ' (t) >C t^{-\beta}$. 
		
		If $n-d_0 -1 >1$ we can take $\beta >1$ and we reach a contradiction since  the integral of $\varphi '$ in $(0,t_0 )$  would be $+\infty$.

		If $n-d_0 -1 =1$ we can take any $\beta <1$. In particular, if  $u=\varphi \circ {\bf d} \in C^{0,\alpha} (M)$ we can take $1-\alpha <\beta <1$.
		For any fixed $t\in (0,t_0 )$ and any $s$ such that $0<s<t$ we have that for some positive constant $R$

		$$\frac{\varphi (t) - \varphi (s)}{(t-s)^{\alpha} } \leq R. $$
		
		On the other hand, there exists $x\in (s,t)$ such that $\varphi ' (x) (t-s) = \varphi (t) -\varphi (s)$. Since $\varphi ''<0$ in  $(0,t_0 )$ we have that
		$\varphi ' (x) > \varphi ' (t)$. Then we have that
		
		$$\frac{ \varphi '(t) (t-s) }{(t-s)^{\alpha}} <R ,$$
		
		\noindent
		or
		$\varphi' (t) < R  (t-s)^{\alpha -1}$. Since this is valid for all $s$ we can take the limit as $s\rightarrow 0$ and we have that 
		$\varphi ' (t) \leq R t^{\alpha -1}$.

		Therefore we have that for all $t\in (0,t_0 )$

		$$\frac{C}{t^{\beta}} < \varphi ' (t)  \leq \frac{R}{t^{1-\alpha}} .$$

		Then we would have that for all $t$ small
		
		$$\frac{1}{t^{\beta - (1-\alpha)}} < \frac{R}{C} ,$$
		
		\noindent
		which is a contradiction.
		
	\end{proof}

We will need the following result in Section 6.

\begin{Lemma}\label{U} Assume that there is a sequence $u_i = \varphi_i \circ {\bf d}
\in C^{0,\alpha} (M) \cap C^2 (M- S )$ of $f$-invariant positive  solutions of
Equation \eqref{q}  such that $\max \varphi_i = \varphi_i (t_i ) \rightarrow \infty$ with 
$\lim_{i\rightarrow \infty}  t_i =0$. Then
$t_i =0$ for $i$ large enough.

\end{Lemma}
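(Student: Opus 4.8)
The plan is to argue by contradiction, working entirely with the ordinary differential equation \eqref{ODE1} near the focal submanifold $M_{0}$. Suppose, for contradiction, that along a subsequence (not relabelled) we have $t_{i}>0$ and $\varphi_{i}(0)<\mu_{i}:=\varphi_{i}(t_{i})=\max\varphi_{i}$; the remaining indices, where $\varphi_{i}(0)=\mu_{i}$, have $0$ as a maximum point and may be discarded. Since $t_{i}\to 0$, for $i$ large $t_{i}<D/2$, so $d(t)=t$ on $[0,t_{i}]$; by \eqref{asymptoticM_0} and properness there is $\varepsilon>0$ with $m(t)<0$ on $(0,\varepsilon)$, and I may also assume $t_{i}<\varepsilon$. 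The first step is to isolate a \emph{last ascending interval} $(\tau_{i},t_{i})$ on which $\varphi_{i}$ is strictly increasing with $\varphi_{i}'(\tau_{i})=0$. To do this, note that Lemma \ref{Cero} gives a local maximum of $\varphi_{i}$ at $0$, so $\varphi_{i}$ is not increasing on any interval $(0,\delta)$, whence $Z_{i}:=\{t\in(0,t_{i}):\varphi_{i}'(t)\le 0\}$ is nonempty; and evaluating \eqref{ODE1} at $t_{i}$ (where $\varphi_{i}'(t_{i})=0$) gives $\varphi_{i}''(t_{i})=\mu_{i}\bigl({\bf k}(t_{i})-\mu_{i}^{q-2}t_{i}^{-s}\bigr)$, which is negative for $i$ large because ${\bf k}$ is bounded while $\mu_{i}^{q-2}t_{i}^{-s}\to\infty$; hence $t_{i}$ is a strict local maximum and $\varphi_{i}'>0$ on some $(t_{i}-\eta_{i},t_{i})$. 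Then $\tau_{i}:=\sup Z_{i}$ satisfies $0<\tau_{i}<t_{i}$, $\varphi_{i}'(\tau_{i})=0$, and $\varphi_{i}'>0$ on $(\tau_{i},t_{i})$.

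The second step is to show $m_{i}:=\varphi_{i}(\tau_{i})\to 0$. Since $\varphi_{i}'(\tau_{i})=0$ and $\varphi_{i}'>0$ just to the right of $\tau_{i}$, the right-hand difference quotient of $\varphi_{i}'$ at $\tau_{i}$ is nonnegative, so $\varphi_{i}''(\tau_{i})\ge 0$. Inserting this into \eqref{ODE1} at $\tau_{i}$ (where $\varphi_{i}'(\tau_{i})=0$ and $d(\tau_{i})=\tau_{i}$) gives $0\le {\bf k}(\tau_{i})m_{i}-m_{i}^{q-1}\tau_{i}^{-s}$, hence $m_{i}^{q-2}\le {\bf k}(\tau_{i})\tau_{i}^{s}\le C\tau_{i}^{s}$ with $C:=\|{\bf k}\|_{\infty}$. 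Since $\tau_{i}<t_{i}\to 0$ this forces $m_{i}\to 0$, and in particular $m_{i}^{q}\tau_{i}^{-s}=m_{i}^{2}\,(m_{i}^{q-2}\tau_{i}^{-s})\le C\,m_{i}^{2}\to 0$.

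The main step is an energy identity on $[\tau_{i},t_{i}]$. Multiplying \eqref{ODE1} (with $d(t)=t$) by $\varphi_{i}'$, using $\varphi_{i}''\varphi_{i}'=\tfrac12((\varphi_{i}')^{2})'$, ${\bf k}\varphi_{i}\varphi_{i}'=\tfrac12{\bf k}(\varphi_{i}^{2})'$, $t^{-s}\varphi_{i}^{q-1}\varphi_{i}'=\tfrac1q t^{-s}(\varphi_{i}^{q})'$, integrating over $[\tau_{i},t_{i}]$ and integrating by parts in the last term, I obtain
\[
\Bigl[\tfrac12(\varphi_{i}')^{2}\Bigr]_{\tau_{i}}^{t_{i}}-\int_{\tau_{i}}^{t_{i}}m\,(\varphi_{i}')^{2}-\int_{\tau_{i}}^{t_{i}}\tfrac12{\bf k}\,(\varphi_{i}^{2})'+\frac{\mu_{i}^{q}}{q\,t_{i}^{s}}-\frac{m_{i}^{q}}{q\,\tau_{i}^{s}}+\frac{s}{q}\int_{\tau_{i}}^{t_{i}}\frac{\varphi_{i}^{q}}{t^{s+1}}=0 .
\]
The boundary term vanishes since $\varphi_{i}'(\tau_{i})=\varphi_{i}'(t_{i})=0$; the term $-\int m(\varphi_{i}')^{2}$ is nonnegative since $m<0$ on $[\tau_{i},t_{i}]\subset(0,\varepsilon)$; and the weighted integral is nonnegative. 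For the ${\bf k}$-term I would use $\varphi_{i}'>0$ on $(\tau_{i},t_{i})$ to write $\bigl|\int_{\tau_{i}}^{t_{i}}\tfrac12{\bf k}\,(\varphi_{i}^{2})'\bigr|\le C\int_{\tau_{i}}^{t_{i}}\varphi_{i}\varphi_{i}'=\tfrac{C}{2}(\mu_{i}^{2}-m_{i}^{2})\le \tfrac{C}{2}\mu_{i}^{2}$. Combining, $\dfrac{\mu_{i}^{q}}{q\,t_{i}^{s}}\le \dfrac{C}{2}\mu_{i}^{2}+\dfrac{m_{i}^{q}}{q\,\tau_{i}^{s}}=\dfrac{C}{2}\mu_{i}^{2}+o(1)$, so $\mu_{i}^{q-2}\le \dfrac{Cq}{2}\,t_{i}^{s}+o(1)\le \dfrac{Cq}{2}\,D^{s}+o(1)$. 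Since $q>2$ the left side tends to $\infty$, a contradiction; hence $t_{i}=0$ for $i$ large.

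As for the difficulties: the computations are elementary, but two points require care. First, a priori $\varphi_{i}$ may oscillate, so the interval of integration for the energy identity must be chosen to have critical points of $\varphi_{i}$ as endpoints and $\varphi_{i}$ monotone on it — this is exactly why one needs the observation that $\varphi_{i}''(t_{i})<0$ for $i$ large, which rules out wiggles just below the peak and guarantees $\tau_{i}<t_{i}$. Second, ${\bf k}$ (equivalently $K$) is only Hölder continuous, so one cannot integrate by parts in the ${\bf k}$-term; the monotonicity $\varphi_{i}'>0$ on $(\tau_{i},t_{i})$ is precisely what lets that term be bounded by $\tfrac{C}{2}\mu_{i}^{2}$, which is of strictly lower order than the gain $\mu_{i}^{q}t_{i}^{-s}$ because $q>2$. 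This last comparison is the crux of the whole argument.
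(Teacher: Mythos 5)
Your argument is correct, but it follows a genuinely different route from the paper's. The paper fixes $r>0$ with $m<0$ on $(0,r)$ and $-\|{\bf k}\|_\infty + r^{-s}>0$, observes (as you do at $\tau_i$, though only qualitatively) that any interior local minimum of $\varphi_i$ in $(0,r)$ has value at most $1$, and then, on the increasing stretch that follows such a minimum, runs a Gronwall-type argument: $\varphi_i''\le c\,\varphi_i$ gives $\varphi_i'\le cr\,\varphi_i$ and hence a uniform bound $\varphi_i\le e^{cr^2}$ at the end of the stretch, contradicting $\mu_i\to\infty$. You instead multiply \eqref{ODE1} by $\varphi_i'$ and integrate over the last ascending interval $(\tau_i,t_i)$, using the sign of $m$, the monotonicity of $\varphi_i$, and the quantitative bound $\varphi_i(\tau_i)^{q-2}\le C\tau_i^s$ at the one-sided critical point to control the boundary term $m_i^q\tau_i^{-s}$. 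Both proofs rest on Lemma \ref{Cero} to produce the interior critical point, and both ultimately exploit that the nonlinearity $\varphi^{q-1}/t^s$ dominates ${\bf k}\varphi$ near the peak. The paper's Gronwall step is more elementary (no integration by parts against the singular weight $t^{-s}$, hence no need to track the $s\,q^{-1}\int\varphi_i^q t^{-s-1}$ term), while your energy identity yields the sharper quantitative conclusion $\mu_i^{q-2}\lesssim t_i^s+o(1)$; your preliminary observations that $\varphi_i''(t_i)<0$ for large $i$ and $\varphi_i''(\tau_i)\ge 0$ (via the one-sided difference quotient) are correctly justified, and the case $\varphi_i(0)=\mu_i$ with $t_i>0$ that you "discard" is in fact also covered by your argument, since the global maximum being attained at both $0$ and $t_i>0$ still forces $Z_i\neq\emptyset$.
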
	
	
\begin{proof}
Let $c>0$ such that $| {\bf k} | \leq c$. Let $r>0$ small such that $-c +r^{-s} >0$
and $m(t) <0$ for all $t\in (0,r)$. It follows that
if $t \in (0,r)$ is a local minimum of $\varphi_i$ then $\varphi_i (t) \leq 1$. We know from the
previous lemma that 0  is a local maximum of $\varphi_i$.  Assume that for some $i$ there
exists a local minimum $t_0 \in (0,r)$ of $\varphi_i$ and let $\delta >0$ such that $t_0+\delta \leq r$
and $\varphi_i$ is increasing in $(t_0 ,t_0 +\delta)$. Then we have that $\varphi_i '' (t) \leq c \varphi_i (t)$
for any $t\in (t_0 , t_0
+\delta )$. 
Since $\varphi_i' (t_0)=0$ it follows that $\varphi_i ' (t) \leq cr \varphi_i  (t)$ for any $t\in (t_0 , t_0 + \delta )$.
Since $\varphi_i (t_0 ) \leq 1$ this implies that there is a uniform (independent of $i$) bound for 
$\varphi_i (t_0 +\delta )$. Since $\lim_{i\rightarrow \infty}  \varphi_i (t_i ) = \infty$ this implies that $t_i =0$ for $i$ large enough.
\end{proof}
	

\section{Inclusions of spaces of $f$-invariant functions}\label{R-Ksection}

Let $f:M\longrightarrow \R$ be an isoparametric function and $d$ the distance function to the submanifold $S=M_0\cup M_1$. 
Let $q\geq 1$ and $s\in[0,2]$. The weighted Lebesgue space $L^q(M,d^{-s})$ is  the set of functions $u:M\longrightarrow\R$  that satisfy 

	\[\int_M \frac{|u(x)|^q}{d^s(x)}dv_g<\infty.\]
	 \noindent
	 We define a norm on  $L^q(M,d^{-s})$ by setting \[\|u\|_{L^q(M,d^{-s})}=\Big(\int_M \frac{|u(x)|^q}{d^{s}(x)}dv_g\Big)^{\frac{1}{q}}.\]
	 Hence, $L^q(M,d^{-s})$ is a normed space.

	Let $x_{0}\in M$ and let us consider the distance function to $x_0$, $\nu(x)=d_g(x_0,x)$. It is well known that for any $q\leq 2^*(s)$, the inclusion map of the Sobolev space
	 $H^2_1(M)$ into $L^q(M,\nu^{-s})$ is continuous. Moreover, if  $q< 2^*(s)$,  this embedding is compact. The
conditions under which the inclusion holds can be improved if we restrict to $f$-invariant functions. 
Recall that we denote by $k(f)$ the minimum of the dimension of the focal submanifolds of $f$.  
	Let  
	\[2^{*}_f(s):=\frac{2(n-k(f)-s)}{(n-k(f)-2)}.\] 
	 Then the following proposition holds:

	\begin{Proposition}\label{R-Kgeneral} Let $s\in (0,2)$. 
		\begin{itemize}
			\item[i)]   If $n-k(f)>2$ and $q\leq 2^{*}_f(s)$, then the inclusion map  of  $H^2_{1,f}(M)$ into $L^q(M,d^{-s})$ is continuous. Moreover, the embedding  is compact if the inequality is strict.

			\item[ii)] If $n-k(f)\leq 2$  then the inclusion of  $H^2_{1,f}(M)$ into  $L^q(M,d^{-s})$ is compact for any $q\geq 1+\frac{s}{2}$.
		
	\end{itemize} 
		
	\end{Proposition}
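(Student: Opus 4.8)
The plan is to reduce the problem to a one-dimensional weighted inequality by exploiting the $f$-invariance, exactly as the Laplacian was reduced to an ODE in Section 2. Given an $f$-invariant function $u=\varphi\circ\mathbf{d}$, I would use the coarea formula along the level sets $\mathbf{d}^{-1}(t)$: there is a smooth positive density $\theta(t)$ (the volume of the regular level set, which vanishes like $t^{n-d_0-1}$ near $t=0$ and like $(D-t)^{n-d_1-1}$ near $t=D$, by the asymptotics \eqref{asymptoticM_0}--\eqref{asymptoticM_1}) so that $\int_M F(\mathbf{d}(x))\,dv_g=\int_0^D F(t)\,\theta(t)\,dt$ for any $F$. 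In particular $\|u\|_{L^q(M,d^{-s})}^q=\int_0^D |\varphi(t)|^q d(t)^{-s}\theta(t)\,dt$ and $\int_M(|\nabla u|^2+Ku^2)\,dv_g=\int_0^D(\varphi'(t)^2+\mathbf{k}(t)\varphi(t)^2)\theta(t)\,dt$. Near each focal submanifold $\theta(t)\sim c\,t^{n-d_0-1}$ (resp.\ $c(D-t)^{n-d_1-1}$), so the situation is modelled, near $M_0$, on radial functions on a Euclidean ball of dimension $N_0:=n-d_0$, and near $M_1$ on dimension $N_1:=n-d_1$, with a Hardy--Sobolev weight $|x|^{-s}$ centered at the origin. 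The worst case (lowest effective dimension, hence strongest constraint on $q$) is $\min(N_0,N_1)=n-k(f)$, which is why the critical exponent is $2^*_f(s)=\tfrac{2(n-k(f)-s)}{n-k(f)-2}$.

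For part (i), with $n-k(f)>2$, I would first establish the continuous embedding. Fix small $\delta>0$ and split $M$ into a tubular neighborhood $U_0$ of $M_0$, a tubular neighborhood $U_1$ of $M_1$, and the compact "middle" region $M'=\{x:\delta\le\mathbf d(x)\le D-\delta\}$ where $d$ is bounded below away from $0$; on $M'$ the weight is harmless and the ordinary Sobolev embedding $H^2_1\hookrightarrow L^q$ for $q\le 2n/(n-2)$ (which exceeds $2^*_f(s)$ in our range) does the job. On $U_0$, using Fermi coordinates and the auxiliary warped metric $g_*=dt^2+r(t)^2g_0$ of Remark \ref{Metric}, an $f$-invariant function becomes a radial function on a ball; since $r(t)=t+o(t)$, comparison reduces the estimate to the classical Euclidean radial Hardy--Sobolev inequality $\big(\int_{B}|v|^q|x|^{-s}\big)^{2/q}\le C\int_B|\nabla v|^2+C\int_B v^2$ for radial $v$ on a ball in $\mathbb R^{N_0}$, valid precisely for $q\le 2(N_0-s)/(N_0-2)$; since $N_0\ge n-k(f)$ we get $q\le 2^*_f(s)$ suffices. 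The same on $U_1$ with $N_1$. Summing the three pieces gives the continuous inclusion. For compactness when $q<2^*_f(s)$: on $M'$ it is the standard Rellich--Kondrachov theorem; on $U_0,U_1$ one uses that the Hardy--Sobolev embedding for radial functions on a ball is compact in the subcritical range (this is where radiality is essential — it is the gain over the non-invariant setting, analogous to the strict-subcritical compactness used by Jaber), together with the fact that a bounded sequence in $H^2_{1,f}$, after diagonal extraction, converges strongly on $M'$ and on each $U_i$, hence globally.

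For part (ii), where $n-k(f)\le 2$ (so the effective dimension near the "thin" focal submanifold is $1$ or $2$), I would argue that $f$-invariant $H^2_1$ functions are automatically very regular near that submanifold: in the model, a radial $H^1$ function on a ball of dimension $1$ or $2$ is continuous up to and including the center (for $N=1$ this is Sobolev embedding on an interval; for $N=2$ one uses that $\int_0^\delta \varphi'(t)^2\,t\,dt<\infty$ forces $\varphi(t)=o(\sqrt{\log(1/t)})$ and in fact one can bound $\int_0^\delta|\varphi|^q\,t^{1-s}\,dt$ for any $q$ by interpolating the $L^2(t\,dt)$ bound against this logarithmic modulus, since $t^{1-s}$ with $s<2$ is integrable against any power of $\log$). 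Concretely: the weight $d(t)^{-s}\theta(t)$ behaves near the thin submanifold like $t^{1-s}$ (case $N=2$) or $t^{-s}$ (case $N=1$), both integrable on $(0,\delta)$ because $s<2$; so $L^\infty$ control near the submanifold plus the $L^2$ control away from it yields $u\in L^q(M,d^{-s})$ for every $q\ge 1$, and a short compactness argument (equicontinuity of the $\varphi_i$ near $0$ from the uniform $H^1$ bound in the $N=1,2$ model, plus Rellich away from the submanifold) gives compactness; the hypothesis $q\ge 1+s/2$ enters only to make sure $|u|^q/d^s$ is genuinely integrable with the correct Hölder pairing when $u$ is merely bounded but not yet known continuous, i.e.\ it is the threshold beyond which $t^{-s}\cdot(\text{bounded})^q$ is under control after using a crude $L^{2}$-type bound near $0$.

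The main obstacle is the treatment of the tubular neighborhoods $U_0,U_1$: one must justify rigorously that the $f$-invariant norm restricted there is comparable to the radial Euclidean Hardy--Sobolev setup. This requires (a) the precise asymptotics of the level-set volume density $\theta(t)$ — available from \eqref{asymptoticM_0}--\eqref{asymptoticM_1} since $\theta'/\theta=-m$ up to sign — and (b) the auxiliary metric comparison $\Delta_g u=\Delta_{g_*}u$ from Remark \ref{Metric} so that the Dirichlet energy is genuinely the warped-product one, together with the known sharp radial Hardy--Sobolev inequality and its subcritical compactness on a Euclidean ball. Once these two ingredients are in place, the localization-and-sum argument is routine, and part (ii) is a strictly easier version of the same localization in which the model inequality degenerates to an elementary $L^\infty$ estimate.
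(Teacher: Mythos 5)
Your proposal is viable in outline but follows a genuinely different route from the paper. The paper never localizes the Sobolev part of the estimate: it proves a global Hardy inequality $\int_M |u|^2 d^{-2}\,dv_g \le C\|u\|^2_{H^2_1(M)}$ (Lemma \ref{L1}, itself obtained by localization and comparison with the Euclidean Hardy inequality), quotes the unweighted invariant embedding $H^2_{1,f}(M)\hookrightarrow L^{2^*_f(0)}(M)$ from \cite{H} (Lemma \ref{L2}), and then gets the weighted embedding from the single H\"older interpolation $\int_M |u|^{q}d^{-s}dv_g=\int_M |u|^{q-s}\left(|u|/d\right)^{s}dv_g\le \left(\int_M|u|^{2(q-s)/(2-s)}dv_g\right)^{(2-s)/2}\left(\int_M|u|^2 d^{-2}dv_g\right)^{s/2}$; compactness then follows from a truncation/dominated-convergence argument. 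You instead decompose $M$ into tubular neighborhoods of $M_0,M_1$ and a middle region, and invoke the local radial Hardy--Sobolev embedding on a Euclidean ball (with its subcritical compactness) as a black box. Your route is more geometric and makes the ``effective dimension'' heuristic explicit; the paper's is shorter and needs only the two previously established global facts.

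There are, however, concrete errors to repair. First, the middle region: you claim the ordinary embedding $H^2_1\hookrightarrow L^q$ for $q\le 2n/(n-2)$ ``exceeds $2^*_f(s)$ in our range''. This is false in general: writing $2^*_f(s)=2+\frac{4-2s}{n-k(f)-2}$ and $\frac{2n}{n-2}=2+\frac{4}{n-2}$, one has $2^*_f(s)>\frac{2n}{n-2}$ whenever $s<\frac{2k(f)}{n-2}$, which is exactly the supercritical regime the proposition is designed for. For such $q$ the non-invariant Sobolev embedding does not cover the middle region; you must use invariance there as well (where it is trivial: on $\{\delta\le \mathbf d\le D-\delta\}$ an invariant $H^2_1$ function is an $H^1$ function of one variable, hence bounded). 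Second, you assert $\min(N_0,N_1)=n-k(f)$ and later ``$N_0\ge n-k(f)$''; in fact $n-k(f)=\max(N_0,N_1)$ and $N_i\le n-k(f)$, and since $N\mapsto \frac{2(N-s)}{N-2}=2+\frac{2(2-s)}{N-2}$ is decreasing, the binding constraint on $q$ comes from the \emph{largest} effective dimension. Your two sign errors cancel in the final exponent, but the justification as written is wrong. Third, in part (ii) the case $N=1$ cannot occur (properness forces codimension at least $2$; and for $N=1$ the weight $t^{-s}$ would not even be integrable when $s\ge 1$), and bounded radial $H^1$ sequences in the $N=2$ model are not equicontinuous at the origin (the modulus $\sqrt{\log(t'/t)}$ degenerates there); what you actually need, and what the uniform logarithmic bound does give, is that $\int_0^\epsilon|\varphi_i|^q\, t^{1-s}\,dt\to 0$ uniformly in $i$ as $\epsilon\to 0$.
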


	To prove Proposition \ref{R-Kgeneral}, we will make use of the following lemmas. Lemma \ref{L1}  generalizes  the classical Hardy Inequality to the case where  the singular set is given by  the focal submanifolds.
	
	\begin{Lemma}\label{L1} There exists  a constant $C>0$ such that for  any $u\in H^2_1(M)$ it holds 
		\begin{equation}\label{Ineq-Hardy-variedad}\int_M\frac{|u|^2}{d^2(x)}dv_g\leq C\|u\|_{H^2_1(M)}^2.\end{equation}
		\end{Lemma}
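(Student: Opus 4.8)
The plan is to reduce the global estimate to a finite collection of local estimates near $S = M_0 \cup M_1$, using a partition of unity, and then invoke the classical Hardy inequality for submanifolds in each local chart. Away from $S$ the function $d(x)$ is bounded below by a positive constant, so there the inequality $\int |u|^2/d^2 \le C\int |u|^2 \le C\|u\|_{H^2_1(M)}^2$ is trivial. Hence all the work is concentrated in a tubular neighborhood $\mathcal{N}_\varepsilon(S) = \{x : d(x) < \varepsilon\}$ of the (disjoint, compact, smooth) union of the two focal submanifolds, and by compactness of $M_0$ and $M_1$ it suffices to prove a local Hardy inequality in a single Fermi coordinate chart around a point $x_0 \in M_i$.

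First I would fix $x_0 \in M_0$ (the argument for $M_1$ is identical) and introduce Fermi coordinates $(x_1,\dots,x_{d_0}, z)$ as in Remark \ref{Metric}, with $z = (x_{d_0+1},\dots,x_n) \in \mathbb{R}^{n-d_0-1}$ and $\|z\|$ the distance to $M_0$. In these coordinates the metric $g$ has components $g_{ab} = \delta_{ab} + O(\|x\|)$ near the origin, so $dv_g$ is comparable to Lebesgue measure, $\|\nabla_g u\|_g^2$ is comparable to the Euclidean $|\nabla u|^2$, and $d(x) = \|z\|$. The key point is that $N = n - d_0 - 1 \geq 1$ (properness), so I can apply the classical Hardy inequality in the $z$-variable: for each fixed $(x_1,\dots,x_{d_0})$ and each $\psi \in C_c^\infty$ in the $z$-ball,
\begin{equation*}
\int_{\mathbb{R}^{N}} \frac{|\psi(z)|^2}{\|z\|^2}\, dz \le \left(\frac{2}{N-1}\right)^{2}\!\!\int_{\mathbb{R}^{N}} |\nabla_z \psi(z)|^2\, dz \qquad (N \ge 2),
\end{equation*}
and when $N=1$ one uses instead the one-dimensional Hardy inequality on the half-line $\int_0^\infty t^{-2}|\psi|^2\,dt \le 4\int_0^\infty |\psi'|^2\,dt$ after decomposing into the two normal directions. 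Integrating this over the remaining $x$-variables $(x_1,\dots,x_{d_0})$ in the chart, transferring back to $g$ at the cost of harmless constants, and summing over a finite cover of $S$ by such charts gives $\int_{\mathcal{N}_\varepsilon(S)} |u|^2/d^2\, dv_g \le C \int_M |\nabla_g u|^2\, dv_g + C\int_M u^2\,dv_g$ for $u$ supported near $S$; a cutoff argument splicing this with the trivial bound away from $S$ finishes the proof for general $u \in H^2_1(M)$, first for smooth $u$ and then by density.

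The main obstacle is the borderline case $N = n - d_0 - 1 = 1$, i.e. a focal submanifold of codimension exactly $2$. There the Euclidean Hardy constant $(2/(N-1))^2$ degenerates, and one must be more careful: the normal space is $1$-dimensional, $d(x) = |x_{d_0+1}|$ is the absolute value of a single coordinate, and the correct inequality is the Hardy inequality on $\mathbb{R} \setminus \{0\}$, which does hold with constant $4$ and no logarithmic loss because the weight $|t|^{-2}$ is nonintegrable at $0$ from both sides — exactly the situation where the one-dimensional Hardy inequality is valid. The subtlety is purely that one should not try to use the radial ($N$-dimensional) form of Hardy when $N=1$; using the interval/half-line form makes it go through. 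A secondary technical point is handling the cutoff near $\partial \mathcal{N}_\varepsilon(S)$ so that the commutator terms $[\Delta_g, \chi]$ are absorbed into the $\int u^2$ term, which is routine.
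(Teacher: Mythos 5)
For focal submanifolds of codimension at least $3$ your argument is essentially the paper's: reduce to a tubular neighborhood of $S$ (away from $S$ the weight $d^{-2}$ is bounded and the estimate is trivial), pass to Fermi coordinates in which $g$ is comparable to the Euclidean metric and $d(x)=\|z\|$, apply the classical Euclidean Hardy inequality in the normal variables, and sum over a finite cover. Your slice-wise integration over the tangential coordinates even yields the statement for arbitrary $u\in H^2_1(M)$, whereas the paper's proof is written for $f$-invariant $u$, for which the tangential integral is just a volume factor. Two bookkeeping slips: $z=(x_{d_0+1},\dots,x_n)$ has $n-d_0$ components, not $n-d_0-1$, and the Hardy constant in $\mathbb{R}^m$ is $(2/(m-2))^2$, valid for $m\ge 3$ (your $(2/(N-1))^2$ happens to be the right number only because of the off-by-one).

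The genuine gap is your treatment of the borderline case of codimension $n-d_0=2$. The one-dimensional half-line Hardy inequality you invoke holds only for functions vanishing at the origin (it is stated for $\psi\in C^\infty_c(0,\infty)$); if $\psi(0)\neq 0$ the left-hand side is $+\infty$. Your heuristic that non-integrability of $|t|^{-2}$ at $0$ is ``exactly the situation where the inequality is valid'' is backwards: that non-integrability is precisely why the inequality fails for functions not vanishing on the singular set. Concretely, across a codimension-$2$ submanifold the weight $d^{-2}$ is not locally integrable, since $\int_{\{\|z\|<\varepsilon\}}\|z\|^{-2}\,dz=2\pi\int_0^\varepsilon r^{-1}\,dr=\infty$ in the normal $2$-plane; hence $u\equiv 1$ already gives $\int_M u^2 d^{-2}\,dv_g=+\infty$ with finite $H^2_1$-norm, and no cutoff or splicing can repair this. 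So the inequality genuinely requires both focal submanifolds to have codimension at least $3$; the paper's own proof is silent on this point (it only invokes the Euclidean Hardy inequality, which needs ambient dimension at least $3$), but you affirmatively assert a false inequality to cover it. The honest conclusion in the codimension-$2$ case is that the lemma fails as stated, not that a one-dimensional Hardy inequality rescues it.
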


		This lemma implies  that the embedding of $H^2_1(M)$ into $L^2(M,d^{-2})$ is continuous.
		
		\begin{proof}  Using Fermi coordinate systems, it can be   shown  that for  any point $x\in M$ there exists a  chart $(W_x,\varphi_x)$ that satisfies the following properties:  $(a)$ $x\in W_x$; $(b)$ $\varphi_x(W_x)=U_x\times V_x$, where $U_x\subset \R^{k(x)}$ and   $V_x\subset \R^{n-k(x)}$ are open sets with smooth boundaries (here, $k(x)$ denotes the dimension of the connected component of $f^{-1}\left(f(x)\right)$ to which $x$ belongs);  $(c)$ for any $\overline{x}\in W_x$, $U_x\times \pi(\varphi(\overline{x}))\subseteq \varphi\left(f^{-1}(f(\overline{x}))\right)\cap W_x$ (for $(y,z)\in \R^{k(x)}\times \R^{n-k(x)}$,   $\pi(y,z)=z$);
			$(d)$  there exists  $a_x>0$ such that $a_x^{-1}dv_{g^n_e}\leq dv_g\leq a_x dv_{g^n_e}$,
			where $dv_{g^n_E}$ is the volume element induced by Euclidean metric of $\R^n$. 
			
		Let $u\in S_f$. Properties $(b)$ and  $(c)$ imply that the real valued function  $u_x:V_x:\longrightarrow \R$ defined  by \[u_x(z)=u\left(\varphi_x^{-1}(y,z)\right)\]
			is well defined.  On the other hand,  by property $(d)$  we can compare the  weighted Lebesgue norms (for any weight $\omega$)  and the  $H^2_1$-norms of the functions $u_x$ and $u|_{W_x}$. In particular, for  the weight $d^{-2}$ there exist positive constants $A_x$ , $B_x$, and $C_x$ such that 
			
			\begin{equation}\label{Ineq 1}
			A_x\|u_x\|_{L^r\left(V_x,|y|^{-2}\right)}\leq \|u|_{W_x}\|_{L^r\left(W_x, d^{-2}\right)}\leq B_x\|u_x\|_{L^r\left(V_x,|y|^{-2}\right)}
			\end{equation}
			
			\begin{equation}\label{Ineq 2}
			\|\nabla u_x\|_{L^r\left(V_x\right)}\leq C_x\|\nabla u|_{W_x}\|_{L^r\left(W_x\right)}
			\end{equation}
			
			Since $M$ is compact, there exists a finite collection of points $x_i$ $(i=1,\dots,m)$ such that $M=\bigcup_{i=1}^m W_{x_i}$. In order to simplify the notation we denote this family of charts by $\{(W_i,\varphi_i)\}_{i=1,\ldots,m}$. 
			
			The classical Hardy inequality says that  any $\psi\in C^{\infty}_c(\R^n)$ satisfies
			\begin{equation}\label{HardyEuclidea}
			\int_{\R^n} \frac{\psi^2(x)}{|x|^2}dv_{g^n_E}\leq \frac{4}{n-2}\int_{\R^n}|\nabla \psi(x)|^2dv_{g^n_E}.
			\end{equation}
			
			Therefore, from \eqref{Ineq 1}, \eqref{Ineq 2}, and  \eqref{HardyEuclidea}  it follows that
			
			\begin{align}
			\int_{W_i}\frac{u|_{W_i}^2(x)}{d^2(x)}dv_g&\leq D\int_{U_i\times V_i}\frac{u\circ {\varphi_i}^{-1}(y,z)}{d^2(\varphi^{-1}_i(y,z))}dxdy=D.vol(U_i)\int_{V_i}\frac{u_i^2(y)}{|y|^2}dy\nonumber \\&\leq \overline{D}\int_{V_i}|\nabla u_i|^2dy\leq \tilde{D} \int_{W_i}|\nabla u|_{W_i}|^2dv_g\nonumber. 
			\end{align}
			
			This  implies straightforward Inequality \eqref{Ineq-Hardy-variedad}.
			
			\end{proof}

	 The non-singular case ($s=0$) in Proposition \ref{R-Kgeneral} was addressed in  (\cite{H}, Lemma 6.1). For the reader's convenience, we restate the result below. We refer to \cite{H} for the proof.

		\begin{Lemma}\label{L2} Let $q\geq 1$.
			
			\begin{itemize}
				\item[i)] If $n-k(f)>2$ and $q\leq 2^{*}_f(0)$ then the inclusion map of  $H^2_{1,f}(M)$ into $L^q(M)$ is continuous. The inclusion is compact if $q< 2^{*}_f(0)$.
				
				\item[ii)] If $n-k(f)\leq 2$, then  the inclusion map of  $H^2_{1,f}(M)$ into $L^q(M)$  is  compact  for any $q\geq 1$.

	\end{itemize}

			\end{Lemma}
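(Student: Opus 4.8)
The plan is to use the isoparametric structure to turn the embedding into a one-dimensional weighted Sobolev inequality on $[0,D]$, the weight recording how the area of the level sets degenerates at the focal submanifolds. Writing $u=\varphi\circ {\bf d}$ and using the coarea formula together with $|\nabla {\bf d}|=1$, the reduction is exact:
\[
\int_M |u|^q\,dv_g=\int_0^D |\varphi(t)|^q\,V(t)\,dt,\qquad \int_M|\nabla u|^2\,dv_g=\int_0^D |\varphi'(t)|^2\,V(t)\,dt,
\]
where $V(t)$ is the $g$-area of the level set ${\bf d}^{-1}(t)$. Thus the asserted embedding of $H^2_{1,f}(M)$ into $L^q(M)$ is equivalent to the embedding of the weighted space $\{\varphi:\int_0^D(|\varphi'|^2+|\varphi|^2)V\,dt<\infty\}$ into the weighted $L^q$ space with the same weight $V$, and everything is governed by the behaviour of $V$ at the endpoints.

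Next I would pin down the asymptotics of $V$. Near $M_0$ the level set ${\bf d}^{-1}(t)$ is the boundary of a tube of radius $t$ around a $d_0$-dimensional submanifold, with $(n-d_0-1)$-sphere fibres; the standard tube (Fermi coordinate) computation — equivalently, integrating $V'/V=m(t)$ with the asymptotics $m(t)=-\frac{n-d_0-1}{t}+A(t)$ of \eqref{asymptoticM_0} — gives the two-sided bound $V(t)\asymp t^{\,n-d_0-1}$ as $t\to0^+$, and symmetrically $V(t)\asymp (D-t)^{\,n-d_1-1}$ as $t\to D^-$, while $V$ stays between positive constants on any interior subinterval. The decisive observation is that the weight $t^{\,n-d_i-1}$ is exactly the radial weight $r^{\,m-1}$ in dimension $m=n-d_i$: for a radial function on $\R^{m}$ one has $\int_{\R^m}|\varphi|^q=\omega_{m-1}\int_0^\infty|\varphi|^q r^{\,m-1}\,dr$ and the corresponding identity for $|\nabla\varphi|^2$, so each endpoint model is precisely the (radial) Sobolev embedding of $H^1(\R^{\,n-d_i})$ into $L^q(\R^{\,n-d_i})$.

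Assembling the pieces gives the stated exponents. Since $2^*(m)=\frac{2m}{m-2}$ decreases in $m$, the two endpoints impose $q\le 2^*(n-d_0)$ and $q\le 2^*(n-d_1)$, whereas the interior, being one-dimensional with non-degenerate weight, imposes no restriction; the binding constraint is therefore $q\le 2^*\big(n-\min(d_0,d_1)\big)=2^*_f(0)$, which is case (i). When $n-k(f)\le 2$ the relevant endpoint model is $H^1$ in dimension $1$ or $2$, whose Sobolev embedding into $L^q$ holds for every finite $q$, giving case (ii). For the compactness claim (strict inequality, and all finite $q$ in case (ii)) I would combine interior compactness with a uniform tail estimate: on each $[\delta,D-\delta]$ a bounded sequence $\varphi_i$ subconverges in $C^0$ by the one-dimensional Rellich theorem, while near an endpoint the already-established critical embedding bounds $\int_0^\delta|\varphi_i|^{2^*_f(0)}V$, so Hölder's inequality gives $\int_0^\delta|\varphi_i|^q V\le C\big(\int_0^\delta V\big)^{1-q/2^*_f(0)}\to0$ uniformly in $i$ as $\delta\to0$, and likewise at $t=D$; a diagonal argument then produces an $L^q(M)$-convergent subsequence.

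The step demanding the most care is the endpoint analysis: establishing the sharp two-sided bound $V(t)\asymp t^{\,n-d_i-1}$ uniformly up to the focal submanifold, and transferring the weighted one-dimensional inequality back to $M$ with constants independent of the chart. Here the auxiliary rotationally symmetric metric $g_*=dt^2+r(t)^2 g_0$ of Remark \ref{Metric}, for which $\Delta_g u=\Delta_{g_*}u$ on $f$-invariant functions and $r(t)=t+o(t)$, is the natural device: it realizes the endpoint model as an honest radial problem on a Euclidean-like ball of dimension $n-d_i$, so that the classical radial Sobolev inequality applies directly and the critical exponent $2^*_f(0)$ emerges without further work.
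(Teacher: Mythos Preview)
The paper does not prove this lemma; it is quoted from \cite{H} (Lemma~6.1), to which the reader is referred. Your coarea reduction to a one-dimensional weighted Sobolev problem on $[0,D]$ with weight $V(t)$, together with the endpoint asymptotics $V(t)\asymp t^{\,n-d_i-1}$ and the identification of each endpoint model with the radial Sobolev embedding in $\R^{\,n-d_i}$, is a correct and self-contained argument; the compactness step via uniform tail control plus interior Rellich is also sound.

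A different route, closer in spirit to the paper's own proof of Lemma~\ref{L1} and plausibly to what \cite{H} does, is to cover $M$ by Fermi charts $W_x\simeq U_x\times V_x\subset\R^{k(x)}\times\R^{n-k(x)}$, observe that an $f$-invariant function depends only on the $V_x$-factor, and invoke the Euclidean Sobolev embedding in dimension $n-k(x)$ on each chart; the worst chart (where $k(x)=k(f)$) then dictates the exponent $2^*_f(0)$. The chart approach avoids needing the uniform two-sided bound on $V(t)$ up to the focal sets, at the price of a partition-of-unity patching. Your global one-dimensional reduction is cleaner and makes the role of $k(f)$---as selecting the smaller of the two endpoint Sobolev exponents---more transparent, with the uniform estimate on $V(t)$ (which you rightly flag as the delicate point) being the only extra ingredient.
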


\vspace{0.2cm}

\begin{proof}[Proof of Proposition \ref{R-Kgeneral}] We begin by proving the continuity  and then the compactness of the inclusion map.
	
\vspace{0.2cm}

	{\bf{ Continuity of the inclusion map:}}
	 Lemma \ref{L1} and Lemma \ref{L2}  cover the cases when either $s=0$ or $s=2$, respectively. Therefore, we  assume that $s\in (0,2)$.
	
	\begin{itemize}
	\item[(a)] Assume that $n-k(f)>2$ and $q\leq 2^{*}_f(s)$.  
	
	\begin{itemize}
		
	\item[(a.1)]	First, will we show that the inclusion map  
	\[i: H^2_{1,f}(M)\longrightarrow L^{2^{*}_f(s)}(M,d^{-s} )\] 
	is continuous.   Indeed,

	\begin{align}\|u\|_{L^{2^{*}_f(s)}(M,d^{-s})}^{2^{*}_f(s)}&=\int_M \frac{|u|^{2^{*}_f(s)}}{d^s(x)}dv_g=
	\int_M |u|^{2^{*}_f(s)-s}\Big(\frac{u}{d(x)}\Big)^sdv_g\nonumber\\
	&\leq\Big( \int_M |u|^{\frac{2(2^{*}_f(s)-s)}{2-s}}dv_g \Big)^{\frac{2-s}{2}} \Big( \int_M\frac{|u|^2}{d^2(x)}dv_g \Big)^{\frac{s}{2}}.  \nonumber
	\end{align}
	
	Since,
	$2^{*}_f(s)-s=\frac{2(n-k(f)-s)}{n-k(f)-2}-s=\frac{(2-s)\left(n-k(f)
		\right)}{n-k(f)-2}$ it follows that
	
	\[\|u\|_{L^{2^{*}_f(s)}(M,d^{-s})}^{2^{*}_f(s)}\leq \Big( \int_M |u|^{\frac{2(n-k(f))}{n-k(f)-2}}dv_g \Big)^{\frac{2-s}{2}} \Big( \int_M\frac{|u|^2}{d^2(x)}dv_g \Big)^{\frac{s}{2}}. \]
	Therefore,

\begin{align}\|u\|_{L^{2^{*}_f(s)}(M,d^{-s})}&\leq \Big( \int_M |u|^{2^{*}_f(0)}dv_g \Big)^{\frac{2-s}{2.2^{*}_f(s)}} \Big( \int_M\frac{|u|^2}{d^2(x)}dv_g \Big)^{\frac{s}{2.2^{*}_f(s)}} \nonumber\\
&=\Big(\|u\|_{L^{2^{*}_f(0)}(M)}\Big)^{\frac{(2-s)2^{*}_f(0)}{2.2^{*}_f(s)}}
\Big( \int_M\frac{|u|^2}{d^2(x)}dv_g \Big)^{\frac{s}{2.2^{*}_f(s)}}\nonumber\\
&=\Big(\|u\|_{L^{2^{*}_f(0)}(M)}\Big)^{\frac{(2-s)(n-k(f))}{(n-k(f)-2)2^{*}_f(s)}}\Big( \int_M\frac{|u|^2}{d^2(x)}dv_g \Big)^{\frac{s}{2.2^{*}_f(s)}}.\nonumber
\end{align}

By Lemma \ref{L1} and Lemma \ref{L2} we have that

\begin{align}\|u\|_{L^{2^{*}_f(s)}(M,d^{-s} )}&\leq \Big(C_1 \|u\|_{H^2_1(M)} \Big)^{\frac{(2-s)(n-k(f))}{(n-k(f)-2)2^{*}_f(s)}} \Big(C_2 \|u\|_{H^2_1(M)}\Big)^{\frac{s}{2^{*}_f(s)}}\nonumber\\
&=\tilde{C} \|u\|_{H^2_1(M)}^{\big(\frac{(2-s)(n-k(f))}{(n-k(f)-2)}+s \big)\frac{1}{2^{*}_f(s)}}=\tilde{C}\|u\|_{H^2_1(M)}.\nonumber
\end{align}

\item[(a.2)] Let us assume that $q<2^{*}_f(s)$.  It follows from  H\"{o}lder inequality that

\begin{align}\|u\|_{L^q(M,d^{-s}  )}^q&=\int_M |u|^{q}\frac{1}{d^s(x)}dv_g\nonumber\\ &\leq  \Big(\int_M \frac{|u|^{2^{*}_f(s)}}{d^s(x)}dv_g\Big)^{\frac{q}{2^{*}_f(s)}}\Big( \int_M \frac{1}{d^s(x)}dv_g  \Big)^{1-\frac{q}{2^{*}_f(s)}}.\nonumber\end{align}

Hence, using (a.1) we have that 

 \begin{align}\|u\|_{L^q(M,d^{-s})}&\leq C\|u\|_{L^{2^{*}_f(s)}(M,d^{-s} )}\leq \tilde{C} \|u\|_{H^2_1(M)},
 \nonumber
 \end{align}

 \noindent
 which completes the proof in this case.

\end{itemize}

\item[(b)] Assume that $n-k(f)\leq  2$ and $q\geq 1+\frac{s}{2}$. Using  the H\"{o}lder inequality we have that
\begin{equation}\label{desigconHolder}\int_M \frac{|u|^{q}}{d^s(x)}dv_g\leq \Big( \int_M |u|^{\frac{2(q-s)}{2-s}}dv_g \Big)^{\frac{2-s}{2}} \Big( \int_M\frac{|u|^2}{d^2(x)}dv_g \Big)^{\frac{s}{2}}. 
\end{equation}
Therefore, using  Lemma \ref{L1},
\begin{align}\|u\|_{L^{q}(M,d^{-s})}&\leq \Big( \int_M |u|^{\frac{2(q-s)}{2-s}}dv_g \Big)^{\frac{(2-s)}{2(q-s)}\frac{(q-s)}{q}} \Big( \int_M\frac{|u|^2}{d^2(x)}dv_g \Big)^{\frac{s}{2q}}.  \nonumber\\
&\leq C\Big(\|u\|_{L^{\frac{2(q-s)}{2-s}}(M)}\Big)^{\frac{q-s}{q}}\|u\|_{H^2_1(M)}^{\frac{s}{q}}.\nonumber
\end{align}

Since $ \frac{2(q-s)}{(2-s)}\geq 1$, then it follows from Lemma \ref{L2}  that 
\[\|u\|_{L^{q}(M,d^{-s})}\leq \tilde{C} \|u\|_{H^2_1(M)}^{\frac{q-s}{q}}\|u\|_{H^2_1(M)}^{\frac{s}{q}}=\tilde{C} \|u\|_{H^2_1(M)}.\]

which establishes the continuity of the inclusion map in this case.

\end{itemize}

\vspace{0.2cm}

{\bf Compactness of the inclusion map:}

\begin{itemize}

\item[(a)] Assume that $q<  2^{*}_f(s)$ and $n-k(f)>2$. Let $\{u_k\}_{k\in \N}$ be a bounded sequence in  $H^2_{1,f}(M)$. By the classical  Rellich-Kondrakov theorem the inclussion of 
	$H^2_{1,f}(M)$ into $L^2(M)$ is  a compact map. Therefore,  up to subsequence,  $\{u_{k}\}_{i\in \N}$  converges weakly in
$H^2_1(M)$ and  strongly in $L^2(M)$, to a function $u\in H^2_1(M)$.
	
	Given $T>0$, let $A^k_T$ be the subset of $M$ defined by $A^k_T:=\{x\in M:\ |u_{k}(x)-u(x)|<T\}$. We have that

\begin{align}\int_{M-A^k_T} \frac{|u_{k}(x)-u(x)|^q}{d^s(x)}dv_g&=\int_{M-A^k_T} |u_{k}(x)-u(x)|^{q-2^{*}_f(s)}\frac{|u_{k}(x)-u(x)|^{2^{*}_f(s)}}{d^s(x)}dv_g \nonumber\\
 &\leq \frac{1}{T^{2^{*}_f(s)-q}}  \int_{M} \frac{|u_{k}(x)-u(x)|^{2^{*}_f(s)}}{d^s(x)}dv_g\nonumber\\&\leq \frac{C}{T^{2^{*}_f(s)-q}} \|u_{k}-u\|_{H^2_1(M)}^{2^{*}_f(s)}\nonumber\\
 &\leq \frac{\tilde{C}}{T^{2^{*}_f(s)-q}}\longrightarrow_{T\to +\infty} 0\nonumber
 \end{align}

On the other hand, let   $\{h_{k}\}_{k\in \N}$ be the sequence defined by  \[h_{k}=\frac{|u_{k}-u|^q}{d^s(x)}\chi_{A^k_T},\]
where $\chi_{A^k_T}$ is the characteristic function of the set $A^k_T$.  The functions  $h_{k}$ are bounded  above by  $\frac{T^q}{d^s}$, which is integrable.  Since  $\{h_{k}\}_{k\in \N}$ converges a.e. to $0$,  the dominated convergence theorem implies that  
\[\lim_{k\to +\infty} \int _M h_{k}dv_g=0.\]
Hence,  given $\varepsilon>0$, for  $T$ and $k$  big enough we have that 
\[\|u_{k}-u\|_{L^q(M,d^{-s})}^q=\int_{M-A^k_T}\frac{|u_{k}(x)-u(x)|^q}{d^s(x)}dv_g+\int_{A^k_T}\frac{  |u_{k}(x)-u(x)|^q}{d^s(x)}dv_g\leq \varepsilon. \]
 This proves that $\{u_{k}\}_{k\in N}$ converges to $u$ in $L^q(M,d^{-s})$.

\vspace{0.2cm}

\item[(b)] Finally, we assume that $n-k(f)\leq 2$ and $q\geq 1+\frac{s}{2}$.  Let $\{u_k\}_{k\in \N}$ be  a  bounded sequence in $H^2_{1,f}(M)$. Since $\frac{2(q-s)}{2-s}\geq 1$,  it follows from  Lemma \ref{L2}  that there exists a function $u$  such that, up to a subsequence, $\{u_{k}\}_{k\in \N}$  converges strongly to $u$ in $L^{\frac{2(q-s)}{2-s}}(M)$.  

By Equation \eqref{desigconHolder} we have that
\[\|u_{k}-u\|_{L^q(M,d^{-s})}\leq \Big( \int_M |u-u_k|^{\frac{2(q-s)}{
		(2-s)}}dv_g \Big)^{\frac{2-s}{2q}} \Big( \int_M\frac{|u-u_k|^2}{d^2(x)}dv_g \Big)^{\frac{s}{2q}}. 
\]
Note that $\|u_{k}-u\|_{H^2_1(M)}$ is a bounded sequence, therefore  from Lemma \ref{L1} it follows that  

\begin{align}\|u_{k}-u\|_{L^q(M,d^{-s})}&\leq C\Big(\|u_{k}-u\|_{L^{\frac{2(q-s)}{2-s}}(M)}\Big)^{\frac{q-s}{q}}\|u_{k}-u\|_{H^2_1(M)}^\frac{s}{q}\longrightarrow_{k\to+\infty} 0.\nonumber
\end{align}

\end{itemize}

\end{proof}

\section{Regularity of solutions.}\label{regularity}

Let $f$ be a proper  isoparametric function on a closed Riemannian manifold 
$(M,g)$ of dimension $n\geq 3$. In  this section we address the regularity properties of $f$-invariant weak solutions to  the equation 
(\ref{q}) when
 $K\in C^0_f(M)$ and  $q<2_f^*(s)$. The treatment is similar to the results in \cite{Ghoussoub-Robert, Jaber, Mesmar}, 
and we refer to these articles for more details on the proofs.

Recall that $f$  proper  means that ${\bf n} = \min \{ n-d_0 , n-d_1 \} \geq 2 $. 

By considering Fermi coordinates around points in focal submanifolds we can see that $d^{-s} \in L^p (M)$ for any
$p<{\bf n}/s$.

\begin{Remark}
Any weak solution of Equation \eqref{q} belongs to $L^p(M)$ for any $p>1$. 
This follows as in 
(\cite{Jaber}, Step 4 in the proof of Theorem 2),  following the same argument as in  (\cite{Ghoussoub-Robert}, Proposition 8.1).
\end{Remark}

\bigskip

It is easy to prove regularity of solutions away from the singularities:

\begin{Proposition}\label{regularityC1loc} Let $u\in H^2_1(M)$ be a weak solution of Equation \eqref{q}. Then $u\in C^{1,\beta}_{loc}(M-S)$ 
	for any $\beta\in (0,1)$.
\end{Proposition}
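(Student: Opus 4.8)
The plan is to establish interior regularity away from the singular set $S$ by a standard bootstrap argument, exploiting the fact that on any compact subset of $M-S$ the weight $d^{-s}$ is smooth and bounded above and below. First I would fix a point $x_0 \in M-S$ and a small geodesic ball $B = B_r(x_0)$ with $\overline{B} \subset M-S$, so that $d^{-s}$ restricted to $B$ is a smooth positive bounded function; write the equation as $\Delta_g u = F$ with $F = \frac{u^{q-1}}{d^s} - K u$. The first task is to show $F \in L^p(B)$ for some $p > n/2$: since $u \in H^2_1(M)$, by the Sobolev embedding $u \in L^{2^*}(M)$ with $2^* = \frac{2n}{n-2}$, and by the Remark preceding the Proposition any weak solution lies in $L^p(M)$ for every $p > 1$; hence $u^{q-1} \in L^p(B)$ for every $p$, and since $d^{-s}$ and $K$ are bounded on $B$, we get $F \in L^p(B)$ for all $p \ge 1$.

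Next I would invoke elliptic $L^p$ estimates (Calder\'on--Zygmund): $\Delta_g u = F$ with $F \in L^p_{loc}(M-S)$ gives $u \in W^{2,p}_{loc}(M-S)$ for all $p$, and then Sobolev embedding of $W^{2,p}$ into $C^{1,\beta}$ (valid once $p > n$) yields $u \in C^{1,\beta}_{loc}(M-S)$ for every $\beta \in (0,1)$. Concretely, one covers a given compact $\Omega \Subset M-S$ by finitely many such balls $B$, applies the interior estimate $\|u\|_{W^{2,p}(B')} \le C(\|F\|_{L^p(B)} + \|u\|_{L^p(B)})$ on slightly shrunken balls $B' \Subset B$, and chooses $p$ as large as desired; since $\beta$ can be taken to be $1 - n/p$, letting $p \to \infty$ covers all $\beta \in (0,1)$. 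Here it is worth noting one does not even need the full strength of the Remark: since $u \in L^{2^*}$, a single bootstrap step ($u^{q-1} \in L^{2^*/(q-1)}$, and the condition $q < 2^*_f(s) < 2^*$ comfortably guarantees $2^*/(q-1) > 1$, giving $u \in W^{2,2^*/(q-1)}_{loc}$, hence an improved integrability of $u$, and one iterates) already suffices; but invoking the Remark makes the argument immediate.

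The only mild subtlety — and the place where one must be slightly careful — is that the coefficient $d^{-s}$ is merely continuous (not smooth) across the ``ridge'' $d(x) = D/2$ where ${\bf d}^{-1}(D/2)$ sits, but this is not part of $S$ and $d^{-s}$ is still bounded and continuous there, so the $L^p$ theory applies without change; only the $C^{2}$ (and higher) regularity would be obstructed at the ridge, and the Proposition only claims $C^{1,\beta}$. Thus there is really no serious obstacle: the whole argument is the classical bootstrap, and the role of the hypothesis $q < 2^*_f(s)$ (equivalently $q < 2^*$) is just to make the first iteration of the bootstrap legitimate. I would present it as: (1) localize away from $S$; (2) rewrite the equation with right-hand side $F$; (3) check $F \in L^p_{loc}(M-S)$ for all $p$ using the Remark (or a short bootstrap); (4) apply interior $W^{2,p}$ estimates and Sobolev embedding to conclude $u \in C^{1,\beta}_{loc}(M-S)$ for all $\beta \in (0,1)$.
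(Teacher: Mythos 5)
Your main line of argument is exactly the paper's proof: away from $S$ the weight $d^{-s}$ is bounded, the right-hand side $F=\frac{u^{q-1}}{d^s}-Ku$ lies in $L^p_{loc}(M-S)$ for every $p>1$ because the preceding Remark gives $u\in L^p(M)$ for all $p$, and interior $W^{2,p}$ elliptic estimates followed by the Sobolev embedding $W^{2,p}\hookrightarrow C^{1,\beta}$ for $p>n$ yield the claim for every $\beta\in(0,1)$. One caveat on your parenthetical aside: the inequality $2^*_f(s)<2^*$ is false in general --- since $2^*_f(s)=2+\frac{2(2-s)}{n-k(f)-2}$ grows as $k(f)$ grows, the exponent $q$ may well be supercritical with respect to the ambient dimension $n$ (this is precisely the point of restricting to $f$-invariant functions), so the ``single bootstrap step from $u\in L^{2^*}$'' you sketch as an alternative would not close in general, and the full strength of the Remark (i.e.\ $u\in L^p(M)$ for all $p$, which rests on the invariance/reduced effective dimension) really is needed.
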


\begin{proof} For any $p>1$, we have that  $f(x,u)=\frac{u^{q-1}(x)}{d^{s}(x)}-K(x)u(x) \in L^p_{loc}(M- S)$. 
	Since $u$ satisfies the equation $\Delta_g u =f(x,u)$ we see in
	particular that for any $p>n$ we have that $u\in H^p_{2,loc} (M-S)$ by elliptic regularity. By the Sobolev embedding theorem we know that $H^p_{2,loc}(M-S)$ is continuously embedded in $C^{1,\beta}_{loc}(M-S)$ provided $\beta<1-\frac{n}{p}$. Hence, any $u\in H^2_1(M)$ that is a weak solution of Equation \eqref{q} belongs to $C^{1,\beta}_{loc}(M-S)$ 
for any  $\beta\in (0,1)$.

\end{proof}

We also have:

\begin{Proposition}\label{regularityC2loc} Let $u\in H^2_1(M)$ be a non-negative weak solution of Equation \eqref{q} with $K\in C^{0,\delta}(M)$ with $\delta\in (0,1)$. Then $u\in C^{2,\delta}_{loc}(M-S)$.
\end{Proposition}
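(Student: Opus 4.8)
The plan is to bootstrap from the $C^{1,\beta}_{loc}(M-S)$ regularity already established in Proposition \ref{regularityC1loc} to $C^{2,\delta}_{loc}(M-S)$ using Schauder theory for the equation $\Delta_g u = F(x)$, where $F(x) = \frac{u^{q-1}(x)}{d^s(x)} - K(x)u(x)$. Fix an arbitrary point $x_0 \in M - S$ and a small geodesic ball $B = B_r(x_0)$ with $\overline{B} \subset M - S$. On such a ball the distance function $d(x) = d_g(S,x)$ is smooth and bounded away from zero, so $d^{-s} \in C^\infty(\overline B)$; in particular it is $C^{0,\delta}(\overline B)$ for the given $\delta \in (0,1)$.

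First I would observe that $u \in C^{1,\beta}_{loc}(M-S)$ for every $\beta \in (0,1)$, in particular $u \in C^{0,\delta}(\overline B)$. Since $q > 2$, the map $t \mapsto t^{q-1}$ is $C^1$ on $\mathbb{R}_{\geq 0}$ (here is where non-negativity of $u$ is used, so that $u^{q-1}$ is well-defined and the power behaves nicely at $0$), hence composition with the Hölder-continuous function $u$ gives $u^{q-1} \in C^{0,\delta}(\overline B)$ — indeed $C^{0,\gamma}$ for any $\gamma \in (0,1)$, using that on the compact set $\overline B$ the values of $u$ lie in a bounded interval where $t\mapsto t^{q-1}$ is Lipschitz. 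Combining this with $d^{-s} \in C^\infty(\overline B)$ and $K \in C^{0,\delta}(\overline B)$ (this is where the hypothesis $K \in C^{0,\delta}_f(M)$ enters) we get $F \in C^{0,\delta}(\overline B)$.

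Next I would invoke interior Schauder estimates for the elliptic operator $\Delta_g$: since the coefficients of $\Delta_g$ in local coordinates are smooth, a weak (hence, by Proposition \ref{regularityC1loc}, classical $C^{1,\beta}$) solution of $\Delta_g u = F$ with $F \in C^{0,\delta}_{loc}$ satisfies $u \in C^{2,\delta}_{loc}$. Concretely, on $B_{r/2}(x_0)$ one has $\|u\|_{C^{2,\delta}(B_{r/2})} \leq C(\|F\|_{C^{0,\delta}(B_r)} + \|u\|_{C^0(B_r)})$. Since $x_0 \in M-S$ was arbitrary and the balls cover $M-S$, this yields $u \in C^{2,\delta}_{loc}(M-S)$, completing the proof.

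The only genuinely delicate point is the regularity of the composition $u^{q-1}$; everything else is a routine application of standard elliptic theory. The subtlety is that $t \mapsto t^{q-1}$ need not be smooth at $t = 0$ when $q$ is not an integer, but since $q > 2$ the exponent $q-1 > 1$ so the function is at least $C^1$ on $[0,\infty)$ with locally bounded derivative, which is all that is needed to preserve Hölder continuity under composition; this is why one restricts to non-negative solutions here, mirroring the treatment in \cite{Ghoussoub-Robert, Jaber, Mesmar}. I would also note in passing that once $u \in C^{2,\delta}_{loc}(M-S)$, if $K$ were smoother one could iterate to obtain higher regularity in $M-S$, but $C^{2,\delta}$ is all that is claimed and all that is needed (e.g. so that Theorem 1.1 and Lemma \ref{Cero} apply).
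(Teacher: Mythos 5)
Your proposal is correct and follows the same route as the paper: express the equation as $\Delta_g u = F(x)$ with $F=\frac{u^{q-1}}{d^s}-Ku$, use the $C^{1,\beta}_{loc}(M-S)$ regularity of Proposition \ref{regularityC1loc} together with the smoothness of $d^{-s}$ away from $S$ to conclude $F\in C^{0,\delta}_{loc}(M-S)$, and then apply interior Schauder estimates. Your discussion of why $t\mapsto t^{q-1}$ preserves H\"older continuity for non-negative $u$ correctly fills in the one detail the paper leaves implicit.
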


\begin{proof}  By the previous proposition we have that $f(x,u)=\frac{u^{q-1}(x)}{d^s(x)}-K(x)u(x)\in C^{0,\delta}_{loc}(M-S)$. 
	Since $u$ is a weak solution of $\Delta_g u=f(x,u)$, by applying Schauder estimates, we obtain  that $u\in C^{2,\delta}_{loc}(M-S)$.
	
\end{proof}

Finally we have the following result:

\begin{Proposition}\label{regularityC0} Let $u\in H^2_1(M)$ be a weak solution of Equation \eqref{q}. Then $u\in C^{0,\alpha}(M)$ 
	for any  $\alpha\in (0,\min(1,2-s))$.
\end{Proposition}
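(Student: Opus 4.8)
The plan is to represent $u$ through a Green's function and to estimate the resulting singular integrals. Fix $c>0$ so that $\Delta_g+c$ is invertible and let $G$ be its Green's function; we use the standard bounds $0<G(x,y)\le C\, d_g(x,y)^{2-n}$, $|\nabla_x G(x,y)|\le C\, d_g(x,y)^{1-n}$ and, for each $\alpha\in(0,1)$,
\[
|G(x_1,y)-G(x_2,y)|\le C\, d_g(x_1,x_2)^{\alpha}\bigl(d_g(x_1,y)^{2-n-\alpha}+d_g(x_2,y)^{2-n-\alpha}\bigr)
\]
valid whenever $d_g(x_i,y)\ge 2\,d_g(x_1,x_2)$. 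Rewriting Equation \eqref{q} as $\Delta_g u+cu=\Phi$ with $\Phi:=u^{q-1}d^{-s}+(c-K)u$, and recalling that $u\in L^p(M)$ for every $p>1$ (as noted above, cf.\ \cite{Jaber,Ghoussoub-Robert}) while $d^{-s}\in L^p(M)$ for $p<{\bf n}/s$, Hölder's inequality gives $\Phi\in L^p(M)$ for some $p>1$, and hence by elliptic regularity and the invertibility of $\Delta_g+c$,
\[
u(x)=\int_M G(x,y)\,\Phi(y)\,dv_g(y).
\]

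The analytic core is the estimate: for every $\alpha\in[0,\min(1,2-s))$,
\[
\sup_{x\in M}\ \int_M d_g(x,y)^{2-n-\alpha}\, d(y)^{-s}\,dv_g(y)<\infty .
\]
I would prove it with a finite cover of $M$ by coordinate charts. On charts at positive distance from $S$ the weight $d^{-s}$ is bounded and $\int_M d_g(x,\cdot)^{2-n-\alpha}\,dv_g$ is finite because $n-2+\alpha<n$. Near a point of a focal submanifold $M_i$ of dimension $d_i$ and codimension $N=n-d_i\ge 2$ one uses Fermi coordinates $(y',z)\in\R^{d_i}\times\R^{N}$, in which $d(y)\asymp|z|$, $dv_g\asymp dy'\,dz$ and $d_g(x,y)\gtrsim\bigl(|x'-y'|^2+|z_x-z|^2\bigr)^{1/2}$ when $x$ lies in the same chart. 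Integrating first in the $d_i$ variables tangent to $M_i$, and using $N-2+\alpha>0$, gives $\int_{\R^{d_i}}\bigl(|x'-y'|^2+|z_x-z|^2\bigr)^{(2-n-\alpha)/2}\,dy'\le C\,|z_x-z|^{2-N-\alpha}$, and the remaining Riesz-type integral $\int_{|z|<\rho}|z_x-z|^{-(N-2+\alpha)}|z|^{-s}\,dz$ is bounded uniformly in $z_x$ precisely because $(N-2+\alpha)+s<N$, i.e.\ $\alpha<2-s$. (The condition $\alpha<1$ enters only through the Hölder bound for $G$.)

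Granting this estimate, take first $\alpha=0$: together with Hölder's inequality (bounding $|u|^{q-1}$ in $L^{a}$ for $a$ large and the kernel $d_g(x,\cdot)^{2-n}d(\cdot)^{-s}$ in $L^{a'}$ for $a'$ slightly above $1$, which is admissible since $s<2\le N$) the representation formula yields $u\in L^{\infty}(M)$. With $u$ bounded, so that $|\Phi(y)|\le C(1+d(y)^{-s})$, fix $x_1,x_2\in M$, put $R=d_g(x_1,x_2)$, and split $M$ into $A=\{y:\min(d_g(x_1,y),d_g(x_2,y))<2R\}$ and its complement. On $M\setminus A$ the Hölder bound for $G$ and the core estimate give a contribution $\le C R^{\alpha}$; on $A$ one uses $G(x_i,y)\le C\, d_g(x_i,y)^{2-n}$ and the same local computation (now over a ball of radius $2R$), whose contribution is $O\bigl(R^{\min(2,2-s)}\bigr)=O(R^{\alpha})$ for $\alpha<\min(1,2-s)$. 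Hence $|u(x_1)-u(x_2)|\le C\, d_g(x_1,x_2)^{\alpha}$ for every $\alpha\in(0,\min(1,2-s))$, i.e.\ $u\in C^{0,\alpha}(M)$.

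The main obstacle is the core integral estimate, and in particular the bookkeeping near $S$, where the Green kernel and the weight $d(\cdot)^{-s}$ are singular at the same place: the decisive point is that integrating out the $d_i$ directions tangent to the focal submanifold turns the $n$-dimensional kernel exponent $2-n-\alpha$ into the $N$-dimensional one $2-N-\alpha$, after which convergence of the resulting Riesz potential on the normal slice forces exactly $\alpha+s<2$. (When $u$ is $f$-invariant there is a shorter route avoiding Green's functions: by Remark \ref{Metric}, near each focal submanifold $u$ is identified with a radial function on a ball $B_\delta\subset\R^{N}$ solving $\Delta_{g_*}u=\Phi$ with $\Phi\in L^{r}(B_\delta)$ for all $r<N/s$; since $s<2$ one may take $r>N/2$, so $u\in W^{2,r}_{\mathrm{loc}}(B_\delta)\hookrightarrow C^{0,2-N/r}$, and letting $r\uparrow N/s$ gives $C^{0,\alpha}$ near $S$ for all $\alpha<\min(1,2-s)$; since ${\bf d}$ is $1$-Lipschitz and $u$ is $C^{1}_{\mathrm{loc}}(M-S)$, this yields $u\in C^{0,\alpha}(M)$.)
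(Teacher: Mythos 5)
Your argument is correct, but your main route is genuinely different from the paper's. The paper proves the proposition only for $f$-invariant solutions (as announced at the start of Section \ref{regularity}): it uses Remark \ref{Metric} to identify $u$ near a focal submanifold with a radial function on the normal ball $B_{N}\subset\R^{N}$, $N=n-d_i\ge 2$, solving $\Delta_{g_*}u=f(x,u)$ with right-hand side in $L^p$ for $p<N/s>N/2$, and then concludes $u\in H^p_2(B_N)\hookrightarrow C^{0,\alpha}$ with $\alpha<2-N/p\to 2-s$ — i.e.\ exactly the ``shorter route'' you relegate to your final parenthesis. Your primary argument instead represents $u$ by the Green's function of $\Delta_g+c$ and runs a two-step potential estimate ($L^\infty$ first, then the Hölder seminorm via the near/far splitting), performing the dimension reduction at the level of the kernel (integrating out the $d_i$ tangential Fermi coordinates turns the exponent $2-n-\alpha$ into $2-N-\alpha$) rather than at the level of the equation. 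What this buys is that $f$-invariance of $u$ is never used, so your proof covers arbitrary weak solutions, matching the literal statement of the proposition; the paper's route is shorter but needs invariance, since without it the global elliptic estimate $u\in H^p_2(M)$ with $p<{\bf n}/s$ does not reach $p>n/2$ in general. One cosmetic slip: in your core estimate at $\alpha=0$ with $N=2$ the tangential integration constant $N-2+\alpha$ vanishes and the intermediate bound $|z_x-z|^{2-N-\alpha}$ acquires a logarithm; this is harmless for the subsequent $z$-integral (and your $L^\infty$ step is anyway carried by the Hölder-duality argument, not by the $\alpha=0$ case of the core estimate), but the statement ``for every $\alpha\in[0,\min(1,2-s))$'' as proved should start from $\alpha>0$ or absorb the log explicitly.
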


\begin{proof} Let $u$ be a weak solution. First we show that  $u\in H^p_2(M)$ for some $p>1$. 
	
	Let us  consider any $1<p<\frac{\bf n}{s}$. We have that for any $l\geq 1$ the function 
	
\[h(x)=\frac{u^l(x)}{d^s(x)}\]
	belongs to $L^{p}(M)$. Indeed, let   $p<\tilde{p}<\frac{n}{s}$  and let $\bar{p}>1$ such that $1=\frac{p}{\tilde{p}}+\frac{p}{\bar{p}}$. Applying the H\"older inequality and the fact that  $d^{-s}(x)\in L^{\tilde{p}}(M)$ when $\tilde{p}<\frac{\bf n}{s}$, it follows that 
	\begin{align}
	\int_M\Big(\frac{u^l(x)}{d^s(x)}\Big)^pdv_g&\leq \Big(\int_M u^{l\bar{p}}(x)dv_g\Big)^{\frac{p}{\bar{p}}}.\Big(\int_M \frac{1}{d^{s\tilde{p}}(x)}dv_g\Big)^{\frac{p}{\tilde{p}}}\leq C\Big(\int_M u^{l\bar{p}}dv_g\Big)^{\frac{p}{\bar{p}}},\label{lq}
	\end{align}
	for some constant $C$. 
	
	As we mentioned in the previous Remark, we know that any weak solution of Equation \eqref{q} 
	belongs to $L^r(M)$ for any $r>1$. Therefore,  Inequality \eqref{lq} implies that $h$ is in $L^p(M)$. This implies that the function $f(x,u)=\frac{u^{q-1}(x)}{d^s(x)}-K(x)u(x)$ belongs to $L^p(M)$ for any $1<p<\frac{\bf n}{s}$. 
	
	From the previous propositions we only need to prove that for any point in the focal submanifolds there is a neighborhood $U$ of the
	point such that $u\in C^{0,\alpha}(U)$. We consider Fermi coordinates around the point. As $u$ is $f$-invariant it depends only on the
	coordinate $z=(x_{d_i+1},\dots,x_n )$  where $d_i=\dim(M_i)$. In the ball of dimension $n-d_i \geq {\bf n}$, $B_{n-d_i}$,  we have a metric $g_*$ close to the Euclidean metric near the
	origin such that  $u$ is a weak solution of $\Delta_{g_*}  u=f(x,u)$ (see Remark (\ref{Metric})). 
 It follows from elliptic regularity that $u\in H^p_2(B_{n-d_i})$, for some ball centered at 0 in the $z$-coordinates (for any $p\in (1,\frac{\bf n}{s}$)). 
	
	By the Sobolev embedding theorems we  have that $H^2_2(B_{n-d_i})$ is continuously embedded into $C^{0,\alpha}(B_{n-d_i})$, provide  $\alpha<2-\frac{n-d_i}{p}
	<2-\frac{\bf n}{p}$. Therefore, $u\in C^{0,\alpha}(M)$ for any  $\alpha\in (0,\min(1,2-s))$.
	
\end{proof}

\section{Minimizing solution}	
		Let us consider the functional $Q:H^2_{1,f}(M)-\{0\}\longrightarrow \R$ defined by

	\[Q(u)=\frac{\int_M |\nabla u(x)|^2_g+K(x)u^2(x)dv_g}{\Big(\int_M\frac{|u(x)|^{q}}{d^s(x)}dv_g\Big)^{\frac{2}{q}}}.\]
	
	Critical points of $Q$ are, up to a multiplication by a constant,   weak solutions to Equation \eqref{q}.  Let us denote the infimum of $Q$ over $H^2_{1,f}(M)-\{0\}$ by $Y(M,f,s)$.
	
\begin{Theorem}\label{minimizing}  
Let $f$ be a proper isoparametric function of $(M,g)$.  Assume that $s\in (0,2)$ and  $q<2^*_f (s)$. Assume also that $K\in C^{0,\delta}_f(M)$ for some $\delta\in (0,1)$.
Then there exists a positive (minimizing) weak solution  $u$ of Equation \eqref{q}.   Moreover, $u$ belongs to $C^{0,\alpha}(M) \cap  C^{2,\delta}_{loc}(M-S)$ 
with $\alpha\in (0,\min(1,2-s))$. 
\end{Theorem}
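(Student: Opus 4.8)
The plan is to produce $u$ by the direct method of the calculus of variations applied to $Q$ on $H^2_{1,f}(M)\setminus\{0\}$, and then to upgrade the resulting minimizer using the regularity results of Section~4 together with the maximum principle of Theorem~1.1. To set up the framework: since $\Delta_g+K$ is coercive there is $c_0>0$ with $\int_M \big(|\nabla u|^2_g+Ku^2\big)\,dv_g\ge c_0\|u\|^2_{H^2_1(M)}$ for all $u\in H^2_1(M)$, so on $H^2_{1,f}(M)$ the numerator of $Q$ is the square of a norm equivalent to $\|\cdot\|_{H^2_1(M)}$, and it is weakly lower semicontinuous. Since $2<q<2^*_f(s)$ and $s\in(0,2)$ — hence also $q>1+\tfrac{s}{2}$ — Proposition~\ref{R-Kgeneral} applies in either of its two cases, and the inclusion $H^2_{1,f}(M)\hookrightarrow L^q(M,d^{-s})$ is continuous and compact. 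Combining these, $Q$ is bounded below by a positive constant, so $Y(M,f,s)$, the infimum of $Q$, is finite and strictly positive.

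Next I would carry out the minimization. Choose a minimizing sequence and rescale each term so that its weighted $L^q$-integral in the denominator equals $1$; then the numerators converge to $Y(M,f,s)$ and, by coercivity, the sequence is bounded in $H^2_1(M)$. Passing to a subsequence it converges weakly in $H^2_1(M)$ to some $u$, which lies in the closed subspace $H^2_{1,f}(M)$. By the compactness in Proposition~\ref{R-Kgeneral} the convergence is strong in $L^q(M,d^{-s})$, so the weighted $L^q$-integral of $u$ equals $1$; in particular $u\not\equiv 0$. The Rellich--Kondrakov theorem gives strong $L^2$-convergence, hence $\int_M Ku_i^2\,dv_g\to\int_M Ku^2\,dv_g$, and together with weak lower semicontinuity of the Dirichlet integral this yields $\int_M \big(|\nabla u|^2_g+Ku^2\big)\,dv_g\le Y(M,f,s)$. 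Therefore $Q(u)\le Y(M,f,s)$, so $Q(u)=Y(M,f,s)$ and $u$ is a minimizer.

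I would then pass to a nonnegative solution and conclude. Since $|\nabla u|$ is unchanged when $u$ is replaced by $|u|$, and $|u|$ is again $f$-invariant, $Q(|u|)=Q(u)$, so I may assume $u\ge 0$; being a minimizer, $u$ is a critical point of $Q$, hence — as recorded at the beginning of this section, using the reduction to the ordinary differential equation \eqref{ODE1} of Section~2 — a positive multiple of $u$ is an $f$-invariant weak solution of Equation~\eqref{q}, the relevant Lagrange multiplier being $Y(M,f,s)>0$ (test the Euler--Lagrange equation against $u$), so that the rescaling is possible because $q\neq 2$. After rescaling, $u$ is a nonnegative, nonzero, $f$-invariant weak solution of \eqref{q}. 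By Propositions~\ref{regularityC0}, \ref{regularityC1loc} and \ref{regularityC2loc} — the last using $K\in C^{0,\delta}_f(M)$ and $u\ge 0$ — we obtain $u\in C^{0,\alpha}(M)\cap C^{2,\delta}_{loc}(M-S)$ with $\alpha\in(0,\min(1,2-s))$; in particular $u\in C^0(M)\cap C^2(M-S)$, so Theorem~1.1 applies and, as $u\not\equiv 0$, $u>0$.

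The main obstacle I anticipate is guaranteeing that the weak limit of the minimizing sequence is nonzero, i.e.\ upgrading weak $H^2_1$-convergence to strong convergence in the weighted space $L^q(M,d^{-s})$; this is exactly what the compactness in Proposition~\ref{R-Kgeneral} provides, and it is here that the restriction to $f$-invariant functions is essential — it raises the relevant critical exponent from $2^*(s)$ to $2^*_f(s)$, which is what makes $q<2^*_f(s)$ a genuine subcriticality condition. A secondary technical point, again handled by the reduction to \eqref{ODE1} in Section~2, is that a critical point of $Q$ on the symmetric space $H^2_{1,f}(M)$ is in fact an honest weak solution of Equation~\eqref{q}.
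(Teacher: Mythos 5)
Your proposal is correct and follows essentially the same route as the paper: direct minimization of $Q$ using the compact embedding of Proposition~\ref{R-Kgeneral}, extraction of a nonnegative nonzero minimizer, the regularity results of Section~4, and Theorem~1.1 for positivity. You simply spell out details the paper leaves implicit (weak lower semicontinuity, the $|u|$ replacement, the Lagrange-multiplier rescaling, and the coercivity hypothesis, which the paper states in Theorem~\ref{minimal} but omits from the restatement in Section~5).
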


\begin{proof} It follows  from Proposition \ref{R-Kgeneral}  that the inclusion map of $H^2_{1,f}(M)$ into $L^{q}(M,d^{-s})$ is compact. Let $\{u_k\}_{k\in\N }$ be a 
minimizing sequence of $Q$. We can assume that $u_k \geq 0$  and  $\|u_k\|_{L^q (M,d^{-s}) }=1$ for any $k$. Then, $\{u_k\}$ must contain  a  subsequence that 
converges to a non-negative $f-$invariant minimizer function $u\neq 0$. Therefore, $Y(M,f,s)$ is attained  by $u$, which is a critical point of  $Q$. The regularity of the
 solution $u$ was proved in Section \ref{regularity} (see Propositions  \ref{regularityC1loc}, \ref{regularityC2loc} and \ref{regularityC0}). From Theorem 1.1  
it follows that $u$ is positive.
	 
	\end{proof}

\section{$C^0$-bound for positive $f$-invariant solutions}

Let $f: M \rightarrow \R$ be a proper isoparametric function. Let $K \in C^{0,\delta}  (M)$ be an $f$-invariant function.
We let $M_0$, $M_1$ be the focal submanifolds of $f$. We recall that the level sets of $f$ are at constant distance
to $M_0$ and $M_1$.  

In this section we will consider the equation

\begin{equation}\label{E}
\Delta u(x) +K(x) u(x) = \frac{u^q(x)}{d(x)^s},
\end{equation}

\noindent
with $s\in (0,2)$, $q<2_f^* (s)$ and prove that there is a $C^0$-bound for positive 
$f$-invariant solutions. 

\medskip

Recall that  ${\bf {d}} :M\rightarrow [0,D]$ denote the distance to
$M_0$. If $u$ is an $f$-invariant function, then $u= \varphi \circ {\bf{d}}$, for some function $\varphi: [0,D] 
\rightarrow \R$.   We let ${\bf k}: [0,D] \rightarrow \R$ be the function such that $K(x) ={\bf k}\left({\bf d} (x)\right)$.
Then $u$ solves the equation if and only if
$\varphi$ verifies:

\begin{equation}\label{ODE}
\varphi '' (t) -m(t) \varphi ' (t) -{\bf k} (t) \varphi (t) +  \frac{\varphi^q (t)}{d^s(t)} =0
\end{equation}

\noindent
where $m(t)$ denotes the mean curvature of ${\bf d}^{-1} (t)$ and $d(t)$ denotes the distance of ${\bf{d}}^{-1} (t)$ to
the focal submanifolds $S=M_0 \cup M_1 $: $d(t) = t $ if $t\in [0,D/2]$
and $d(t)=D-t$ if $t\in [D/2,D]$.

We prove:

\begin{Theorem}There is a uniform $C^0$ bound for non-negative $f$-invariant solutions of Equation (\ref{E}).

\end{Theorem}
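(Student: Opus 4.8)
The plan is to argue by contradiction through a blow-up analysis, in the spirit of the a priori bounds obtained in \cite{Ghoussoub-Robert}, \cite{Jaber} and \cite{Mesmar}. Suppose no uniform bound exists. Then there is a sequence $u_i=\varphi_i\circ{\bf d}$ of $f$-invariant solutions of Equation \eqref{E} with $M_i:=\max_{[0,D]}\varphi_i=\varphi_i(t_i)\to\infty$. Each $u_i$ is non-negative and nonzero, so by Theorem 1.1 it is positive, and by the regularity results of Section \ref{regularity} we have $u_i\in C^{0,\alpha}(M)\cap C^2(M-S)$; in particular Lemmas \ref{Cero} and \ref{U} apply to it. Passing to a subsequence we may assume $t_i\to t_\infty\in[0,D]$, and we distinguish the case of a regular concentration point, $t_\infty\in(0,D)$, from that of a focal one, $t_\infty\in\{0,D\}$.

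\emph{Regular concentration point.} Here I would rescale the ordinary differential equation \eqref{ODE} directly. For $i$ large, $t_i$ lies in a fixed compact subinterval of $(0,D)$, on which $m$ and ${\bf k}$ are bounded and $d$ is bounded away from $0$. Put $\psi_i(\tau)=M_i^{-1}\varphi_i(t_i+\lambda_i\tau)$ and choose $\lambda_i\to0$ so that the coefficient of the nonlinear term in the rescaled equation equals $1$; this is possible and forces $\lambda_i\to0$ because $q>2$. In the equation satisfied by $\psi_i$ the first- and zeroth-order coefficients carry extra factors $\lambda_i\,m(t_i+\lambda_i\tau)$ and $\lambda_i^{2}\,{\bf k}(t_i+\lambda_i\tau)$ and hence tend to $0$ uniformly on compact sets, while $d(t_i+\lambda_i\tau)\to d(t_\infty)>0$. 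Since $0\le\psi_i\le1$, $\psi_i(0)=1$ and $\psi_i'(0)=0$ (because $\varphi_i$ has a maximum at the interior point $t_i$ and is $C^2$ there), standard ODE compactness (Gronwall plus Arzel\`a--Ascoli) gives a subsequence converging in $C^2_{loc}(\R)$ to $\psi$ with $0\le\psi\le1$, $\psi(0)=1$, satisfying $\psi''=-\,d(t_\infty)^{-s}\,\psi^{\,q-1}$ on all of $\R$. Then $\psi$ is concave and bounded below, hence constant, hence $\psi\equiv1$, whence $0=\psi''=-\,d(t_\infty)^{-s}<0$, a contradiction. Therefore $t_\infty\in\{0,D\}$.

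\emph{Focal concentration point.} Assume $t_\infty=0$; the case $t_\infty=D$ is identical, using the obvious analogue of Lemma \ref{U} at $M_1$. By Lemma \ref{U}, $t_i=0$ for all large $i$, so $\max_M u_i$ is attained along $M_0$. Fix a point of $M_0$ and use the Fermi coordinates and the auxiliary metric $g_*$ of Remark \ref{Metric}: then $u_i$ is a radial function on a $z$-ball $B_\delta\subset\R^{N}$ with $N=n-d_0\ge{\bf n}\ge2$, solving $\Delta_{g_*}u_i+Ku_i=u_i^{\,q-1}/|z|^s$. Rescale by $v_i(z)=M_i^{-1}u_i(\lambda_i z)$, with $\lambda_i\to0$ chosen to match the natural scaling of the Hardy--Sobolev nonlinearity; then $v_i$ solves $\Delta_{\tilde g_i}v_i+\lambda_i^{2}K(\lambda_i\cdot)v_i=v_i^{\,q-1}/|z|^s$ on $B_{\delta/\lambda_i}$, where the rescaled metrics $\tilde g_i$ converge to the Euclidean metric and $\lambda_i^{2}K(\lambda_i\cdot)\to0$. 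Since $s<2$ we may pick $\theta$ with $N/2<\theta<N/s$, so $|z|^{-s}\in L^{\theta}_{loc}$ and the $v_i$ enjoy uniform local $C^{0,\gamma}$ bounds near the origin and uniform $C^2$ bounds on compact subsets of $\R^N\setminus\{0\}$; along a subsequence they converge to a radial $v$ with $0\le v\le1$, $v(0)=1$, solving $-\Delta v=v^{\,q-1}/|z|^s$ on $\R^N\setminus\{0\}$ and weakly on $\R^N$. Writing the equation as $(r^{N-1}v')'=-\,r^{N-1-s}v^{\,q-1}$ and using $v(0)=1=\max v$ together with $N\ge2$, one checks that $r^{N-1}v'(r)\to0$ as $r\to0^+$, that $v$ is strictly decreasing on $(0,\infty)$, and that $v(r)\to0$ as $r\to\infty$.

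To conclude, I would invoke the Liouville-type nonexistence theorem for the Hardy--Sobolev equation. The hypothesis $q<2^*_f(s)$, together with $k(f)=\min\{d_0,d_1\}\le d_0$ (so $N=n-d_0\le n-k(f)$) and the fact that $m\mapsto\frac{2(m-s)}{m-2}$ is decreasing, forces $q-1<\frac{N+2-2s}{N-2}$, i.e. the limit equation is strictly subcritical relative to the Hardy--Sobolev exponent $\frac{2(N-s)}{N-2}$ of $\R^N$. In this range, $-\Delta v=v^{\,q-1}/|z|^s$ has no positive bounded solution on $\R^N$: when $q-1\le\frac{N-s}{N-2}$ this is immediate from the nonexistence of positive supersolutions on exterior domains, in the remaining range it follows from a Pohozaev--Rellich identity on large balls whose boundary integrals are negligible along a suitable sequence of radii by virtue of the decay $v(r)\to0$ and the finiteness of $\int_{\R^N}|\nabla v|^2$ and $\int_{\R^N}v^{\,q}|z|^{-s}$, and when $N=2$ one may instead use that a bounded superharmonic function on $\R^2$ is constant. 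Since $v(0)=1$, this is the required contradiction, completing the proof. I expect this last step to be the main difficulty: it is where the subcriticality $q<2^*_f(s)$ is genuinely used and where the decay estimates that kill the Pohozaev boundary terms must be set up carefully; the regular-point case, by contrast, is essentially immediate, because the sign of the nonlinearity in \eqref{ODE} makes the rescaled limit concave.
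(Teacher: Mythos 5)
Your proposal follows essentially the same route as the paper: argue by contradiction, split according to whether the concentration point lies on $S$ or not, rescale (your two choices of $\lambda_i$ coincide with the paper's $b_i=a_i^{(1-q)/2}$ at a regular point and $b_i=a_i^{(1-q)/(2-s)}$ at a focal point, up to the paper's own $q$ versus $q-1$ notational slip in Equation \eqref{E}), use Lemma \ref{U} to place the maximum on the focal submanifold, pass to the limit equations, and derive a contradiction from a nonexistence statement. Your treatment of the regular case is correct and, if anything, slightly cleaner than the paper's (which works only on the half-line $(-R,0]$ and lets $R\to\infty$), since concavity plus boundedness on all of $\R$ forces the limit to be constant. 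The one substantive divergence is the focal case: the paper disposes of the limit problem $-\Delta v=v^{q-1}/\|z\|^s$ on $\R^{N}$ in one line by citing the Liouville-type theorem of Phan and Souplet (\cite{Subcritical}, Proposition A), whereas you attempt to prove it. Your sketch of that step is not complete as written: the Pohozaev--Rellich argument requires the finiteness of $\int_{\R^N}|\nabla v|^2$ and $\int_{\R^N}v^{q}|z|^{-s}\,dv$ and enough decay to kill the boundary terms, none of which you establish (mere boundedness and $v(r)\to0$ do not suffice), and you acknowledge as much. Since the needed nonexistence result is exactly what \cite{Subcritical} provides for bounded (radial) solutions in the subcritical range $q<\frac{2(N-s)}{N-2}$ --- a range your proposal correctly verifies via $N=n-d_0\le n-k(f)$ and the monotonicity of $m\mapsto\frac{2(m-s)}{m-2}$ --- the efficient fix is simply to cite it rather than reprove it.
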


\begin{proof}
	
	Assume that there is a sequence $\{u_i\} _{i\geq 1}$ of non-negative solutions of Equation (\ref{E})  such that 
	$a_i = max \  u_i  \rightarrow +\infty$. Let $p_i \in M$ such that
	$u_i (p_i )= a_i$. We can assume that the sequence $p_i$ converges to
	some $p\in M$.

	{\bf Case 1}: $p\in S$. We consider a normal neighborhood of $p$ in $S$ and then geodesic coordinates in the 
	directions normal to $S$. We have then a neighborhood of $p$ with coordinates $(x_1, \dots , x_k, x_{k+1}, \dots , x_n )$
	so that the distance of $x$ to $S$ is $\| (x_{k+1}, \dots ,x_n ) \|$. Since $u$ is $f$-invariant it only depends on the 
	variable $z =(x_{k+1},\dots , x_n ) $, and actually only on $\| z \|$.
	
	We can express $u_i$ as a function of the variable $z$ which verifies

	\begin{equation}\label{R}
	\Delta_{g_*} u_i (z)+K(z) u_i(z) = \frac{u_i^q(z)}{\| z \|^s},
	\end{equation}
	
	\noindent
	where $g_*$ is  the metric on the $z$-slice given by Remark \ref{Metric}.  Let $b_i = a_i^{\frac{1-q}{2-s}}$ and consider 
	$w_i (z) = \frac{1}{a_i} u_i (b_i z)$. Note that $\lim_{i \rightarrow \infty}  b_i =0$. For any $R>0$ and $i$ large enough the functions
	$w_i$ are defined in $B(0,R)$ and  $0\leq w_i \leq 1$.
	Note that  from Lemma \ref{U}  it follows that we can assume that $w_i (0)=1$.  It follows from Equation (\ref{R}) that

	\begin{equation}
	b_i^2 a_i^{-1}  \Delta_{g_*} u_i  (b_i z) +b_i^2 a_i^{-1}  K(b_i z) u_i (b_i z) = a_i^{-1+q}  b_i^{2-s} \frac{(a_i^{-1}   u_i(b_i z ))^q}{  \| z \|^s},
	\end{equation}
	
	Since $\Delta_{g_*}  (w_i ) = b_i^2a_i^{-1}  \Delta_h u_i  (b_i z)  +b_i T$ where $T$ is an expression involving first derivatives of $w_i$ we
	can write

	\begin{equation}
	\Delta_{g_*} ( w_i ) +{b_i}  T_1 (w_i ) +  b_i^2 T_0 (w_i )  = \frac{w_i^q}{\| z \|^s},
	\end{equation}
	
	As in the proof of Proposition \ref{regularityC0} we can see that $w_i$ is uniformly bounded in $C^{0,\alpha}$ for some 
	$\alpha \in (0,1)$ and changing $\alpha$ if necessary, that it converges  in $C^{0,\alpha}$
	to some $w\in C^{0,\alpha}$.
	Note that $w(0)=1$.  Moreover as in Proposition \ref{regularityC2loc} we can see that the sequence is uniformly bounded in
	$C^{2,\beta} (K)$ for
	any compact subset  $K$ of $M-S$. We can therefore assume that the sequence converges in $C^{2,\beta}$ on
	compact subsets of $M-S$. Since the metric $g_*$ is close to the Euclidean metric near the origin  and $\lim_{i\to \infty} b_i =0$,  it follows
	from the previous comments  that the limiting function $w\in C^2 (B-0) \cap C^0 (B)$ is a nonnegative solution of

	\begin{equation}
	\Delta w = \frac{w^q}{\| z \|^s}.
	\end{equation}
	
	We know that $w\geq 0$ and $w(0)=1$. It follows that $w>0$. 
	Since $w_i$ are radial functions $w$ is also a radial function. We can let $R\rightarrow \infty$ 
	and therefore $w$ is defined in $\R^{n-k}$. But
	it is known that such solutions do not exist, see \cite[Proposition A]{Subcritical}.
	
	Therefore, this rules out Case 1.
	
	\medskip
	
	{\bf Case 2}: $p\in M-S$. 
	Let $t_i = {\bf {d}} (p_i )$. Assume that $t_i \leq \frac{D}{2}$ for all $i$ (up to taking a subsequence we can assume that this is the case or
	$t_i \geq \frac{D}{2}$ for all $i$, which is treated similarly).
	Let ${\bf{d}} (p)=t = \lim_{i\to\infty} t_i$. It follows that  $t\in (0,D/2 ]$. Consider $\varepsilon >0$ such that 
	$(t-2 \varepsilon , t ] \subset (0,D/2]$. Let $b_i =a_i^{\frac{1-q}{2}}$. Also let $\varphi_i :[0,D]\rightarrow \R$ be the function such that
	$u_i = \varphi_i \circ {\bf d}$. For $i$ large enough  we can 
	consider $\psi_i  :(-\varepsilon b_i^{-1} ,  0]  \rightarrow \R$ given by
	
	$$\psi_i (r)=  \frac{1}{a_i} \varphi_i (  t_i +b_i  r) .$$
	
	\noindent
	Note that $\psi_i (0) =1$ and $\psi_i ' (0)= 0$.
	
	\medskip

	Since $\varphi_i$ satisfies Equation (\ref{ODE}), it follows from a
	direct computation that $\psi_i$ verifies:

	\begin{equation}\label{Normalized}
	\psi_i '' (r) -b_i m(t_i +b_i r ) \psi_i ' (r) - b_i^2 {\bf k}(t_i +b_i r) \psi_i + \frac{\psi_i^q (r)}{(t_i +b_i r)^s} =0 .
	\end{equation}
	
	For any fixed $R>0$ by taking $i$ large enough we can assume that the functions $\psi_i$ are defined on $(-R,0]$. 
	Note that in this interval the functions $r \mapsto m(t_i +b_i r )$, $r \mapsto  {\bf k}(t_i +b_i r)$ and
	$r \mapsto (t_i +b_i r)^{-s}$ are uniformly bounded. Since $\lim_{i \rightarrow \infty}  b_i =0$ and $0\leq \psi_i \leq 1$,
	it follows that the sequence
	$(\psi_i )_i$  is uniformly bounded
	in $C^2 (-R,0]$.  Since $\psi_i$ solves  Equation  (\ref{Normalized}) the sequence is also uniformly bounded in $C^{2,\alpha}  (-R,0]$. 
	Therefore there is a subsequence which converges in $C^2 (-R,0]$  to a solution $\psi$ of the equation 
	
	$$\psi '' (r) +  \frac{\psi^q (r)}{t^s} =0 .$$

	Moreover $\psi \geq 0$,  $\psi (0 )=1$ and $\psi '(0 ) =0$. It is easy to see that such a funcion does
	not exist for $R$ large enough.
	Therefore, Case 2 cannot happen.
	
\end{proof}

\section{Mountain pass solutions}	

In this section, we apply the mountain pass technique to prove Theorem  \ref{mountain pass}.  

Let $f$ be a proper  isoparametric function of $(M,g)$. Let $K\in C^{0,\delta}_f (M)$ such that  the operator $\Delta_g+K$ is coercive. 
Assume that $s\in (0,2)$ and $q>2$. When  $n-k(f)>2$,  we additionally assume that $q<2^*_f(s)$.

Let us consider the functional $J^q : H^2_{1,f}(M)\longrightarrow \R$ defined  by 

\[J^{q}(u):=\frac{1}{2}\int_M|\nabla u|^2_g+ K(x)u^2dv_g-\frac{1}{q}\int_M\frac{|u|^q}{d^s(x)}dv_g
\]
We point out that $H^2_{1,f}(M)$ is a closed subset of $H^2_{1}(M)$ and it is $\Delta$-invariant by the conditions (6) and (7). It follows that
critical points of $J^{q}$ are $f-$invariant  weak solutions to Equation (\ref{q}).

 Note that if $u$ is a critical point of  $J^{q}$,   then $\int_M |\nabla u|^2_g+K(x)u^2dv_g-\int_M\frac{u^{q}}{d^{s}(x)}dv_g=0.$   Therefore,  we have that 
	\[J^{q}(u)=\left( \frac{1}{2}-\frac{1}{q}\right)  \int_M\frac{u^{q}}{d^{s}(x)}dv_g .\]

Then Theorem 1.4 follows from the following proposition.

\begin{Proposition}\label{sequenceofcritical} Let $f$, $K$, $s$ and $q$ be as above. There exists a non-decreasing sequence $\{c_m\}_{m\in \N}$ of critical points of $J^{q}$.
Moreover, $\lim_{m\rightarrow \infty} c_m = \infty$.
\end{Proposition}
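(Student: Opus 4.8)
The plan is to apply the Symmetric Mountain Pass Theorem of Ambrosetti and Rabinowitz (\cite{Ambrosetti-R}) to the even functional $J^q$ on the Hilbert space $H^2_{1,f}(M)$. First I would record the standard structural facts about $J^q$. Coercivity of $\Delta_g+K$ gives an equivalent norm $\|u\|^2 := \int_M |\nabla u|^2_g + K u^2\, dv_g$ on $H^2_{1,f}(M)$, and by Proposition \ref{R-Kgeneral} the inclusion $H^2_{1,f}(M)\hookrightarrow L^q(M,d^{-s})$ is compact whenever $q<2^*_f(s)$ (and, when $n-k(f)\le 2$, for every $q\ge 1+s/2$, which covers all $q>2$ in that range). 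The compactness of this embedding is exactly what makes the subcritical problem tractable: it yields the Palais–Smale condition for $J^q$. So the first block of the argument is: (i) $J^q\in C^1(H^2_{1,f}(M))$ with the derivative one expects; (ii) $J^q$ is even; (iii) $J^q$ satisfies $(PS)_c$ for every $c$, proved in the usual way — a $(PS)$ sequence is bounded because $J^q(u_m)-\tfrac1q\langle (J^q)'(u_m),u_m\rangle = (\tfrac12-\tfrac1q)\|u_m\|^2$ forces $\|u_m\|$ bounded, then weak convergence plus compactness of the $L^q(M,d^{-s})$ embedding upgrades to strong convergence.

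Next I would verify the geometric hypotheses of the symmetric version. The mountain-pass geometry near $0$: since $\|u\|_{L^q(M,d^{-s})}\le C\|u\|$ and $q>2$, we get $J^q(u)\ge \tfrac12\|u\|^2 - \tfrac{C^q}{q}\|u\|^q \ge \rho_0>0$ on a small sphere $\|u\|=r$. For the unbounded part, for each $m$ pick an $m$-dimensional subspace $V_m\subset H^2_{1,f}(M)$ (e.g. spanned by $m$ eigenfunctions of $\Delta_g+K$ restricted to the invariant subspace, or any $m$ linearly independent invariant functions); on the finite-dimensional space $V_m$ all norms are equivalent, so $J^q(u)\le \tfrac12\|u\|^2 - c_m\|u\|^q\to-\infty$ as $\|u\|\to\infty$, hence $J^q<0$ outside a large ball $B_{R_m}$ of $V_m$. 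This is precisely the linking data needed by the symmetric mountain pass theorem (Theorem 9.12 in Rabinowitz's CBMS notes, the version in \cite{Ambrosetti-R}): define
\[
c_m := \inf_{\gamma\in\Gamma_m}\ \max_{u\in D_m}\ J^q(\gamma(u)),
\]
where $D_m = \overline{B_{R_m}}\cap V_m$ and $\Gamma_m$ is the class of odd continuous maps $\gamma: D_m\to H^2_{1,f}(M)$ that are the identity on $\partial D_m$. The theorem then gives that each $c_m\ge \rho_0>0$ is a critical value, and the sequence $\{c_m\}$ is non-decreasing (larger $m$ means infimum over a smaller class / larger domain), yielding infinitely many critical points.

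The real point of the proposition — and the step I expect to be the main obstacle — is the divergence $c_m\to\infty$. The standard mechanism is a dimension/genus estimate: one shows that $c_m$ dominates a min-max value over sets of Krasnoselskii genus $\ge m$, and that $J^q$ on the sphere $S_1=\{\|u\|=1\}$, being bounded below there by a quantity controlled by $\|u\|_{L^q(M,d^{-s})}^{-\text{something}}$, cannot stay bounded on symmetric subsets of arbitrarily large genus. Concretely, set $b_m := \inf_{A\in\mathcal A_m}\sup_{u\in A} J^q(u)$ with $\mathcal A_m$ the symmetric compact subsets of $H^2_{1,f}(M)$ of genus $\ge m$; one has $c_m\ge b_m$ (up to the usual identification), so it suffices to show $b_m\to\infty$. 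If $b_m$ were bounded by $b$ for all $m$, then by the deformation lemma and the $(PS)$ condition the sublevel set $\{J^q\le b\}$ would contain symmetric sets of every genus, forcing the set of critical points with $J^q\le b$ to be infinite and in fact of infinite genus; combined with the identity $J^q(u)=(\tfrac12-\tfrac1q)\int_M u^q d^{-s}\,dv_g$ on critical points and the compactness of the embedding, the corresponding critical points would lie in a compact set bounded away from $0$, which cannot carry infinite genus — a contradiction. I would carry out this genus argument carefully (this is where references to \cite{Ambrosetti-R} and the analogous treatment in \cite{Batalla} for the case $s=0$ do the heavy lifting), and note that all functions produced are automatically $f$-invariant since we worked in $H^2_{1,f}(M)$, which is closed and $\Delta_g$-invariant by \eqref{Cgradient}–\eqref{Claplacian}. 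Finally, the assertion $\int_M |u_i|^q d^{-s}\,dv_g\to\infty$ along a subsequence of the produced solutions is immediate from $c_m = (\tfrac12-\tfrac1q)\int_M u^q_{(m)} d^{-s}\,dv_g\to\infty$, which in turn gives Theorem \ref{mountain pass}.
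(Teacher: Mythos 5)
Your proposal is correct in outline but takes a genuinely different route from the paper at the one step that matters, namely the divergence $c_m\to\infty$. The preliminary blocks coincide: both arguments get the Palais--Smale condition from the compactness of $H^2_{1,f}(M)\hookrightarrow L^q(M,d^{-s})$ (Proposition \ref{R-Kgeneral}), and both verify the mountain-pass geometry near $0$ and the negativity of $J^q$ outside large balls of finite-dimensional subspaces (the paper's Lemmas \ref{lemma1} and \ref{lemma E}). The paper, however, uses the \emph{dual} min-max of Theorem 2.13 in \cite{Ambrosetti-R}, $c_m=\sup_{h\in\Gamma^*}\min_{u\in E_m^{\perp}\cap\partial\overline{B_1(0)}}J^q(h(u))$ over odd homeomorphisms $h$ with $h(B_1(0))\subseteq\{J^q\ge 0\}$, and then proves divergence with no genus theory at all: it introduces the Nehari manifold $N$, sets $d_m=\inf_{E_m^\perp\cap N}\|u\|$, shows $d_m\to\infty$ (a bounded sequence in $E_m^\perp\cap N$ would converge weakly to $0$, hence strongly to $0$ in $L^q(M,d^{-s})$ by compactness, contradicting the lower bound on $\|v_m\|_{L^q(M,d^{-s})}$ forced by the Nehari constraint and coercivity), and then exhibits explicit odd homeomorphisms $h_m(u)=(d_m/A)u_1+\varepsilon u_2$ in $\Gamma^*$ to get $c_m\ge A_1 d_m^2$. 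This is concrete and self-contained; your genus route is more standard in the literature but imports more machinery.

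One soft spot in your sketch of the divergence step: from ``$b_m$ bounded'' you jump to ``the critical set in $\{J^q\le b\}$ has infinite genus.'' That implication is not immediate. If infinitely many $b_m$ coincide, the Rabinowitz multiplicity lemma does give $\gamma(K_{b})\ge$ arbitrarily large; but if the $b_m$ are strictly increasing and bounded you must instead work at the limit level $b^*=\lim b_m$: take a neighborhood $N$ of the compact set $K_{b^*}$ with $\gamma(\overline{N})=\gamma(K_{b^*})<\infty$, apply the deformation lemma to push $\{J^q\le b^*+\varepsilon\}\setminus N$ below $b^*-\varepsilon$, and use monotonicity and subadditivity of the genus to contradict $b_m\to b^*$. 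You also need the intersection lemma guaranteeing $c_m\ge b_m$ (images of odd maps in your class $\Gamma_m$ meet spheres in $V_{m-1}^{\perp}$ in sets of genus $\ge m$). Both points are standard but must be supplied; alternatively, you could replace the whole genus contradiction by the paper's direct estimate, whose only analytic input is again the compact embedding restricted to $E_m^{\perp}$.
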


The proof of Proposition \ref{sequenceofcritical} relies on the following lemmas.  We denote by $B_{\delta}(0)$ the ball of radius $\delta$ centered at $0$ in $H^2_{1,f}(M)$.

\begin{Lemma}\label{lemma1} Under the assumptions of Proposition \ref{sequenceofcritical}, 
there exist $\delta_0>0$ and $a>0$ such that $J^{q}$ is positive  in $B_{\delta_0}(0)-\{0\}$ and $J^{q}|_{\partial \overline{B_{\delta_0}(0)}}\geq a>0.$

\end{Lemma}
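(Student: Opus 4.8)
The plan is to verify the usual mountain‑pass geometry at the origin, i.e.\ to show that $0$ is a strict local minimum of $J^{q}$ with a quantitative gap, using nothing more than the coercivity of $\Delta_g+K$ together with the continuity of the embedding $H^2_{1,f}(M)\hookrightarrow L^q(M,d^{-s})$ supplied by Proposition \ref{R-Kgeneral}. First I would record what coercivity gives: there is a constant $\Lambda>0$ such that
\[
\int_M |\nabla u|^2_g + K(x)u^2\, dv_g \;\geq\; \Lambda\,\|u\|_{H^2_1(M)}^2
\qquad\text{for all } u\in H^2_1(M),
\]
and in particular for all $u$ in the closed subspace $H^2_{1,f}(M)$. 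Thus the quadratic part of $J^{q}$ bounds the full Sobolev norm from below.

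Next I would invoke Proposition \ref{R-Kgeneral} to control the singular term. When $n-k(f)>2$, the standing hypothesis $q<2^{*}_f(s)$ gives continuity of the inclusion $H^2_{1,f}(M)\hookrightarrow L^q(M,d^{-s})$; when $n-k(f)\leq 2$, since $s\in(0,2)$ forces $1+\tfrac{s}{2}<2<q$, part (ii) of the same proposition again yields continuity of this inclusion. In either case there is $C>0$ with
\[
\int_M \frac{|u|^{q}}{d^s(x)}\,dv_g \;=\; \|u\|_{L^q(M,d^{-s})}^{q} \;\leq\; C^{q}\,\|u\|_{H^2_1(M)}^{q}
\qquad\text{for all } u\in H^2_{1,f}(M).
\]
Combining the two estimates, for $u\in H^2_{1,f}(M)$ with $\|u\|_{H^2_1(M)}=\rho$ one gets
\[
J^{q}(u)\;\geq\;\frac{\Lambda}{2}\rho^2-\frac{C^{q}}{q}\rho^{q}
\;=\;\rho^2\!\left(\frac{\Lambda}{2}-\frac{C^{q}}{q}\rho^{q-2}\right).
\]

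Since $q>2$, the parenthetical factor is positive for every $\rho\in(0,\delta_0]$ once $\delta_0>0$ is chosen small enough that $\tfrac{C^{q}}{q}\delta_0^{q-2}\leq\tfrac{\Lambda}{4}$; for such $\delta_0$ we obtain $J^{q}(u)\geq\tfrac{\Lambda}{4}\|u\|_{H^2_1(M)}^2$ whenever $0<\|u\|_{H^2_1(M)}\leq\delta_0$. This immediately gives $J^{q}>0$ on $B_{\delta_0}(0)-\{0\}$, and setting $a:=\tfrac{\Lambda}{4}\delta_0^2>0$ yields $J^{q}|_{\partial\overline{B_{\delta_0}(0)}}\geq a$, which is the claim. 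There is no real obstacle in this lemma: the only point requiring attention is checking that the continuous embedding of Proposition \ref{R-Kgeneral} is indeed available in both regimes $n-k(f)>2$ and $n-k(f)\leq 2$ — which is precisely why the restriction $q<2^{*}_f(s)$ is imposed only in the former case — and the rest is the textbook verification of the geometry near the origin.
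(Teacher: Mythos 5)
Your proof is correct and follows essentially the same route as the paper: coercivity of $\Delta_g+K$ for the lower bound on the quadratic term, the continuous embedding $H^2_{1,f}(M)\hookrightarrow L^q(M,d^{-s})$ from Proposition \ref{R-Kgeneral} for the upper bound on the singular term, and the elementary observation that $\rho^2$ dominates $\rho^q$ near $0$ since $q>2$. Your explicit check that the embedding is available in both regimes $n-k(f)>2$ and $n-k(f)\leq 2$ is a small but welcome extra care.
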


\begin{proof}  From Proposition \ref{R-Kgeneral} it follows that the inclusion map of 
$H^2_{1,f}(M)$ into $L^q(M,d^{-s})$ 
is  continuous. Then, since  $\Delta_g+K$ in coercive,  there exist  constants $A_1>0$  and $A_2>0$  such that  any $u\in H^2_{1,f}(M)$  satisfies
\begin{equation*} \int_M \frac{|u|^q}{d^s(x)}dv_g\leq A_1\|u\|_{H^2_1(M)}^q
\end{equation*}
and 
\begin{equation*}\int_M|\nabla u|^2_g+K(x)u^2dv_g\geq A_2\|u\|^2_{H^2_{1}(M)}.\end{equation*}

Therefore, 
	\begin{align}J^{q}(tu)&=t^2\Big(\frac{1}{2}\int_M|\nabla u|^2_g+K(x)u^2dv_g\Big)-t^p\Big(\frac{1}{q}\int_M\frac{|u|^q}{d^s(x)}dv_g\Big)\nonumber\\
	&\geq t^2\tilde{A}_2\|u\|^2_{H^2_{1}(M)}-t^q\tilde{A_1}\|u\|^q_{H^2_{1}(M)}\nonumber
\end{align}	
	where $\tilde{A}_1=\frac{A_1}{q}$ and $\tilde{A}_2=\frac{A_2}{2}$.

If  $\|u\|_{H^2_{1}(M)}=1$,  it holds that
\begin{equation}\label{ineqfunctenlabola}J^{q}(tu)\geq t^2\tilde{A}_2-t^q\tilde{A}_1.
\end{equation}
Since $q >2$, it follows from Inequality \eqref{ineqfunctenlabola}  that there exists $\delta_0$ such that for any $0<t\leq \delta_0$ $J^{q}(tu)>0$  and 
$J^{q}(\delta_0u) \geq  t_0^2  \tilde{A}_2-t_0^q\tilde{A}_1  >0$. This concludes the proof.

\end{proof}

\begin{Lemma}\label{lemma E} Let $E$ be a finite dimensional subspace of $H^2_{1,f}(M)$. Under the assumptions of Proposition \ref{sequenceofcritical},  
we have that $E\cap \{J^{q}\geq 0\}$ is a  bounded set.
\end{Lemma}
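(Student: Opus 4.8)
The plan is to push $J^q$ to $-\infty$ along $E$ using the superlinearity of the term $\int_M |u|^q/d^s\,dv_g$, exactly as in Lemma \ref{lemma1} but now to destroy positivity at infinity instead of forcing it near the origin; the finite-dimensionality of $E$ is what makes this work.

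First I would check that $u \mapsto \|u\|_{L^q(M,d^{-s})}=\big(\int_M |u|^q/d^s\,dv_g\big)^{1/q}$ is a genuine norm when restricted to $E$. It is a norm on all of $L^q(M,d^{-s})$ (Section \ref{R-Ksection}), and under the standing hypotheses of Proposition \ref{sequenceofcritical} one has $E\subset H^2_{1,f}(M)\subset L^q(M,d^{-s})$: if $n-k(f)>2$ this is the continuous inclusion in Proposition \ref{R-Kgeneral}(i), for which we use $q\le 2^*_f(s)$; if $n-k(f)\le 2$ it is Proposition \ref{R-Kgeneral}(ii), whose hypothesis $q\ge 1+\tfrac{s}{2}$ holds automatically since $q>2>1+\tfrac{s}{2}$ (recall $s<2$). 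Hence $\|\cdot\|_{L^q(M,d^{-s})}$ is finite and positive definite on $E$.

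Since $E$ is finite-dimensional, all norms on $E$ are equivalent; in particular there is $c>0$ with $\|u\|_{L^q(M,d^{-s})}\ge c\,\|u\|_{H^2_1(M)}$ for every $u\in E$. Also, as $K$ is continuous on the closed manifold $M$ it is bounded, so there is $C_0>0$ with $\int_M |\nabla u|^2_g+K(x)u^2\,dv_g\le C_0\|u\|^2_{H^2_1(M)}$ for all $u\in H^2_1(M)$. Combining these, for $u\in E$,
\[
J^q(u)\;\le\;\frac{C_0}{2}\,\|u\|^2_{H^2_1(M)}-\frac{c^q}{q}\,\|u\|^q_{H^2_1(M)}.
\]
Because $q>2$, the right-hand side, as a function of $t=\|u\|_{H^2_1(M)}$, equals $\frac{C_0}{2}t^2-\frac{c^q}{q}t^q\to-\infty$ as $t\to\infty$, so there is $R>0$ such that $J^q(u)<0$ whenever $u\in E$ and $\|u\|_{H^2_1(M)}>R$. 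Therefore $E\cap\{J^q\ge 0\}$ is contained in the ball of radius $R$ of $E$, hence bounded.

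The only genuinely delicate point is the very first step — ensuring the weighted $L^q$-quantity is finite on $E$ so the norm-equivalence argument applies — which is precisely where Proposition \ref{R-Kgeneral} and the placement of $q$ relative to $2^*_f(s)$ (and to $1+\tfrac{s}{2}$) are used. Once that is in hand the rest is the elementary superlinear estimate already appearing in Lemma \ref{lemma1}; note that coercivity of $\Delta_g+K$ is not needed for this lemma, only the boundedness of $K$, which is automatic from continuity.
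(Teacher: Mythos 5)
Your proposal is correct and is essentially the paper's own argument: the paper obtains the lower bound $\|u\|_{L^q(M,d^{-s})}^q\geq A_4$ on the unit sphere of $E$ via compactness (which is exactly your equivalence-of-norms step on the finite-dimensional space), bounds the quadratic part above using the boundedness of $K$, and concludes from $J^q(tu)\leq \tfrac{t^2}{2}A_3-\tfrac{t^q}{q}A_4$ that $J^q<0$ outside a ball. Your added remarks — that the weighted $L^q$ inclusion is what makes the restricted norm finite, and that coercivity is not needed here — are accurate but do not change the route.
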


\begin{proof} Let $A_3 >0 $ be such $\int_M|\nabla u|^2_g+K(x)u^2dv_g\leq A_3 \|u\|^2_{H^2_{1}(M)}$ for any $u \in H^2_{1}(M)$.
Since $E$ has finite dimension the set $T=\{ u \in E : \|u\|_{H^2_{1}(M)}=1 \}$ is compact. 
Then, there exists a positive constant $A_4$ such that $\|u\|_{L^q(M,d^{-s})}^q \geq A_4$ for any $u \in T$.

If $u\in T$ we have, 
\[J^{q}(tu) \leq \frac{t^2}{2} A_3-\frac{t^q}{q} A_4.\]
	
It follows that $J^{q}(tu) \leq 0$ if $t^{q-2} \geq \frac{qA_3}{2A_4}$.
	
	\end{proof}

\vspace{1 cm}

Under the assumptions of Proposition \ref{sequenceofcritical}, the functional $J^{q}$ satifies the Palais-Smale condition at any level $r$.  Indeed,  let $\{u_i\}_{i\in\N}$ be a sequence such that 
\begin{equation}\label{PSC}
\lim_{i\to+\infty}J^{q}(u_i)=r\ \mbox{and}\ \lim_{i\to+\infty}(J^{q})'(u_i)=0.
\end{equation}
Using a similar argument to Proposition 14 in  \cite{Mesmar} it can be proved that $\{u_i\}_{i\in \N}$ is  bounded in $H^2_{1,f}(M)$. Hence, up to a subsequence, $\{u_i\}_{i\in \N}$ converges weakly to some $u_r$  in $H^2_{1,f}(M)$. By Proposition \ref{R-Kgeneral}, the inclusion map of $H^2_{1,f}(M)$ into $L^q(M, d^{-s})$  is compact.  Then (up to some subsequence) $\{u_i\}_{i\in \N}$ converges strongly to $u_r$ in $L^q(M, d^{-s})$. Also, $\{u_i\}_{i\in \N}$ converges almost everywhere  in $M$,  therefore $u_r$ if an $f-$invariant function. Finally, we have that 
\[J^q(u_r)=r\ \mbox{and}\ (J^q)'(u_r)=0.\]

\begin{proof}[Proof of Proposition \ref{sequenceofcritical}]
	
Let $\Gamma^*$ be the set of odd homeomorphisms $h: H^2_{1,f}(M) \rightarrow H^2_{1,f}(M)$ such that  $h(0)=0$ and
$h(B_1(0))\subseteq \{J^q\geq 0\}$.

Let $\{e_1, e_2, \dots\}$ be an orthonormal base of $H^2_{1,f}(M)$. We denote with $E_m$ the subspace generated by $\{e_1,\dots, e_m\}$.

 We consider the constants  \[ c_m:=\sup_{h\in \Gamma^*}\min_{u\in E_{m}^{\perp} \cap \partial \overline{B_1(0)}}J^{q}\left(h(u)\right).\]

Since the functional $J^q$ satisfies the Palais-Smale, by Lemma \ref{lemma1}, Lemma \ref{lemma E} ,  the assumptions of 
Theorem 2.13 in \cite{Ambrosetti-R} are satisfied. Then it follows that $c_m$  is a critical value of $ J^{q}$. Then we only need
to prove that $\lim_{m\rightarrow \infty} c_m =\infty$.

Let $N\subset H^2_{1,f}(M)-\{0\}$ be the manifold defined by \[N:=\Big\{  u\in H^2_{1,f}(M)-\{0\}: \frac{1}{2}\int_M |\nabla u|^2_g+K(x)u^2dv_g=\frac{1}{q}\int_M \frac{u^{q}}{d^s(x)}dv_g   \Big\}\]
	
It can be seen that $N$ is homeomorphic to the sphere $\partial{\overline{B_1}(0)}$. For any $u\in H^2_{1,f}(M)-\{0\}$ there exists $t_u>0$ such that $t_uu\in N$. 

We define \[d_m:=\inf_{u\in  E_{m}^{\perp}\cap N}\Big(\int_M |\nabla u|^2_g+K(x)u^2dv_g\Big)^{\frac{1}{2}}.\]

Note that if $u\in  E_{m}^{\perp}-\{0\}$,  then $t_uu\in N\cap E_{m}^{\perp}$. Therefore, 
\begin{equation}\label{dd}d_m\leq \Big(\int_M |\nabla t_uu|^2_g+K(x)(t_uu)^2dv_g\Big)^{\frac{1}{2}}< A t_u \|u\|_{H^2_{1}(M)}\end{equation}
for some $A>0$.

To prove that $\lim_{m\rightarrow \infty} c_m =\infty$ we will prove that $\lim_{m\rightarrow \infty} d_m =\infty$ and that there is a positive constant $\delta$
such that $c_m \geq  \delta d_m^2$.

\begin{itemize}
	\item[\bf Step 1.] We prove that $\lim_{m\rightarrow \infty} d_m =\infty$.  Assume that there is a constant $c$ such that $d_m \leq c$ for all $m$.
Then, there exists a sequence $\{v_m\}\in N\cap E_{m}^{\perp}$ such that \[\int_M |\nabla v_m|^2_g+K(x)v_m^2dv_g\leq c^2+\varepsilon.\]
Since $\Delta_g+K$ is coercive, the sequence $\{v_m\}_{m\in \N}$ must be bounded in $H^2_{1,f}(M)$. Therefore, up to a subsequence, $\{v_m\}_{m\in \N}$ converges weakly to some $v$ in $H^2_{1f}(M)$, and strongly in $L^{q}(M,d^{-s})$ .  However, since $v_m\in E_m^{\perp}$,  it follows that $v$ must be the zero function.	

The inclusion of  $H^2_{1,f}(M)$ into  $L^{q}(M,d^{-s})$ is continuous, then there exists a positive constant $C_0$ such that  
\begin{equation}\label{notbounded}
0<C_0\leq 
\frac{\|v_m\|_{H^2_{1,f}(M)}}{\|v_m\|_{L^{q}(M,d^{-s})}}.
\end{equation}	
Since $v_m\in N$, we have that   $\|v_m\|_{L^{q}(M,d^{-s})}=\Big(\frac{q}{2}\int_M|\nabla v_m|^2_g+K(x)v_m^2dv_g \Big)^{\frac{1}{q}}$.
From the coercivity of $\Delta_g+K$ and \eqref{notbounded}, it follows  that $\|v_m\|_{L^{q}(M,d^{-s})}$ is bounded below by a positive constant, 
which leads to a contradiction.

\item[\bf Step 2.] We prove that there is a positive constant $\delta$
such that $c_m \geq  \delta d_m^2$.
Let  $u\in E_{m}^{\perp}\cap \big(\overline{B_1}-\{0\}\big)$.  
Let $A$ be the positive constant  given in  Inequality \eqref{dd}.

We have,
	 
\begin{align}J^{q} \left( \frac{d_m u}{A} \right)&=
 \frac{d_m^2}{2A^2}\int_M |\nabla u|^2_g+K(x)u^2dv_g-\frac{d_m^{q}}{q A^{q}}
	 \int_M\frac{u^{q}}{d^{s}(x)}dv_g\nonumber\\
	 &=\frac{d_m^2}{2A^2}\int_M|\nabla u|^2_g+K(x)u^2dv_g-\frac{d_m^{q}}{2A^{q}t_u^{q-2}}\int_M\Big( |\nabla u|^2_g+K(x)u^2\Big)dv_g\nonumber\\
	 &=\frac{d_m^2}{2A^2}
	 \left(1-\left(\frac{d_m}{A t_u}\right)^{q-2}\right)\int_M|\nabla u|^2_g+K(x)u^2 dv_g\nonumber
 \end{align}

By Inequality \eqref{dd}, $\left(\frac{d_m}{A t_u}\right)^{q-2}<1$. Then, using the coercivity of $\Delta_g+K$ we obtain that 

\begin{equation}\label{Z}
J^{q} \left( \frac{d_m u}{A} \right) \geq A_1 d_m^2\|u\|^2_{H^2_{1,f}(M)}.
\end{equation}

\vspace{0.4 cm}

For $u\in H^2_{1,f}(M) $ write $u=u_1 + u_2$ with $u_1\in E_{m}^{\perp}$ and $u_2\in E_{m}$.
For $\varepsilon >0$ define  \[h_m(u):=\frac{d_m}{A}u_1+\varepsilon u_2.\]

Then, if $u\in E_{m}^{\perp}\cap \partial\overline{B_1(0)}$ it follows from (\ref{Z}) that $J^q (h_m (u) ) \geq A_1 d_m^2$. If $h_m \in \Gamma^*$ then we would
have that $c_m \geq A_1 d_m^2$. Therefore we only need to prove that $h_m \in \Gamma^*$ for some $\varepsilon >0$. And for this we only need to 
prove that for some $\varepsilon >0$ $h_m (B_1 (0) ) \subset \{ J^q \geq 0 \}$.

 Assume the contrary, that there exists a sequence $\{\varepsilon_i\}$ that goes to zero and 
$u^i = u_1^i + u^i_2 \in B_1 (0) $   such that  
\begin{equation}\label{aa}J^{q} \left(   \frac{d_m}{A} u_1^i+\varepsilon_iu_2^i \right) < 0.
\end{equation}
The sequence $(d_m /A) u_1^i+\varepsilon_i u_2^i$ is bounded in $H^2_{1,f}(M)$, then up to a subsequence, it converges weakly to $u_0$ in $H^2_{1,f}(M)$ and strongly  in $L^{q}(M,d^{-s})$. We have that $J^{q}(u_0)\leq 0$.  Moreover, since $\varepsilon_i \rightarrow 0$ we have that
$u_0 \in E_m^{\perp}$.  By the continuity of the inclusion map of $H^2_{1,f}(M)$ into  $L^{q}(M,d^{-s})$ and the coercivity of $\Delta_g+K$ we have that 
\begin{align}0<L_1&<\frac{\int_M|\nabla( (d_m /A) u_1^i+\varepsilon_i u_2^i)|^2_g+K(x)( (d_m /A)u_1^i+\varepsilon_i u_2^i)^2dv_g}{\| (d_m /A) u_1^i+\varepsilon_i u_2^i\|_{L^{q}(M,d^{-s})}^2}\nonumber\\
&\leq \frac{2}{q} \|(d_m /A)u_1^i+\varepsilon_i u_2^i\|_{L^{q}(M,d^{-s})}^{q-2}.
\nonumber\end{align}
Since $q>2$,  it follows that $u_0\neq 0$.  But then since $u_0\in E_{m}^{\perp}$,  it follows from (\ref{Z}) that $J^{q}(u_0)>0$, which is a contradiction.
\end{itemize}	
	
\end{proof}


\end{document}